\documentclass[11pt,a4paper]{article}
\usepackage[utf8]{inputenc}
\usepackage[T1]{fontenc}
\usepackage{amsmath}
\usepackage{amssymb}
\usepackage{amscd}
\usepackage{amsthm}
\usepackage{proof}
\usepackage{color}

\newtheorem{theorem}{Theorem}[section]
\newtheorem{corollary}[theorem]{Corollary}
\newtheorem{lemma}[theorem]{Lemma}
\newtheorem{prop}[theorem]{Proposition}
\newtheorem{rem}[theorem]{\bf Remark}
\newtheorem{definition}[theorem]{Definition}

\newtheorem{example}[theorem]{\bf Example}



\newcommand{\Hmm}[1]{\leavevmode{\marginpar{\tiny%
$\hbox to 0mm{\hspace*{-0.5mm}$\leftarrow$\hss}%
\vcenter{\vrule depth 0.1mm height 0.1mm width \the\marginparwidth}%
\hbox to 0mm{\hss$\rightarrow$\hspace*{-0.5mm}}$\\\relax\raggedright #1}}}

\newcommand{\en}{{\cal E}}
\newcommand{\dom}{\mathrm{dom}}
\newcommand{\inr}{\mathrm{Inr}}
\newcommand{\vol}{\mathrm{vol}}
\newcommand{\covr}{\mathrm{Covr}}

\newcommand{\ZZ}{\mathbb{Z}}
\newcommand{\CC}{\mathbb{C}}
\newcommand{\RR}{\mathbb{R}}
\newcommand{\NN}{\mathbb{N}}

\title{An uncertainty principle and lower bounds for the Dirichlet Laplacian on graphs}
\author{H.D.~Lenz\footnote{ Mathematisches Institut, Friedrich Schiller
Universit{\"a}t Jena, 07743 Jena, Germany, daniel.lenz@uni-jena.de},
P.R.M.~Stollmann\footnote{Fakult\"{a}t f\"{u}r Mathematik,  Technische
Universit\"{a}t Chemnitz, D-09107 Chemnitz, Germany,
stollman@math.tu-chemnitz.de }
 and G.H.~Stolz \footnote{Department of Mathematics,
University of Alabama at Birmingham,   452 Campbell Hall, Birmingham
AL 35294-1170, USA, stolz@uab.edu }
 }

\begin{document}

\maketitle

\thanks{\small\parindent0cm \textit{This paper is dedicated to W.\ Kirsch and B.\
Simon as part of the celebration of their recent birthdays. We are
grateful for their inspiration.}}

\begin{abstract}
We prove a quantitative uncertainty principle at low energies for
the Laplacian on fairly general weighted graphs with a uniform
explicit control of the constants in terms of geometric quantities.
A major step consists in establishing lower bounds for Dirichlet
eigenvalues in terms of the geometry.
\end{abstract}

\section{Introduction}

It is a phenomenon of general interest that low energy states of
Laplacians are extended in some sense. Several closely related
concepts deal with that fact. One of them is \emph{unique
continuation} for subsolutions of elliptic equations. We refer to
\cite{Ag,ABG,Aro,H,JK} for a small selection of the long list of
contributions and remark that there was renewed interest in
quantitative versions due to the importance of such results for
random Schr\"odinger operators, as seen in \cite{BK}; see also
\cite{BTV,Klein-13,KT-16,NTTV-15,RMV-13} and the literature quoted
there for more recent results. In its original form, unique
continuation means that such subsolutions cannot vanish to infinite
order. This is true in a variety of continuum contexts and certainly
not true for graph Laplacians. In fact, discrete Laplacians even
allow for eigenfunctions with compact support. For the special case
of  a tight binding model associated with the Penrose tiling the
occurrence of this effect has been known since quite some time as
witnessed for example in the physics literature
\cite{ATF,FATK,KF,KS}. This phenomenon is also interesting from a
mathematical point of view, see  \cite{DLMSchY,KLS,Ves}. For certain
planar lattices, however, unique continuation holds due to curvature
conditions as first shown  \cite{KLPS} and later generalized in
\cite{Kel}.

For the above mentioned applications to random Schr\"odinger
operators quite a different point of view is important. Namely,
states are required to be extended in the sense that the norm of
restrictions to subsets remains relevant, provided the subset one
restricts the function to is spread out in space. Many of the
references above deal with that kind of \emph{uncertainty principle}
and establish such kind of lower bounds provided the function in
question is an eigenfunction of a Schr\"odinger operator, or more
generally in the range of the spectral projection of a Schr\"odinger
operator onto a small interval of the energy axis. In view of the
above mentioned phenomenon of compactly supported eigenfunctions,
one cannot hope for an analogous result in the discrete case.
However, as we will show in the present paper, uncertainty
principles hold for low energy states of graph Laplacians and the
results allow for a uniform estimate for large classes of graphs,
with an explicit control of constants phrased in terms of geometric
properties. While our results are pretty general, we stress the fact
that they provide new insights even in the most simple cases, e.g.
the usual euclidean lattices $\mathbb{Z}^d$. For this case, related
results have been found in \cite{EK13,RM14}. For a more detailed
comparison we refer to the discussion following our main Theorem
\ref{main}.

Starting point of our method of proof is a spectral theoretic
uncertainty principle, Theorem 1.1 from \cite{BLS-10}. It deals with
a semibounded selfadjoint operator $H$ in some Hilbert space, a
bounded nonnegative operator $W$, and phrases uncertainty or unique
continuation in terms of the spectral projections $P_I=P_I(H)$ of
$H$. It says that
\begin{equation}\label{PUC}
 P_IWP_I\ge \kappa P_I
\end{equation}
provided there is $t>0$ such that
\begin{equation}\label{shift}
 \max I< \min\sigma (H+tW)=:\lambda_t
\end{equation}
Actually, in this case an explicit lower bound on $\kappa$ is easily established, viz
$$
\kappa\ge \frac{\lambda_t-\max I}{t} ,
$$
where the proof goes by contraposition, merely using the variational characterization of the
spectrum and functional calculus.

For the application we have in mind, $H$ is the Laplacian on a
weighted graph $X$ that obeys some mild assumptions and $W=1_D$ is
the indicator function of a subset that is spread out in $X$ in the
sense that for some $R>0$
$$
X\subset \bigcup_{p\in D}B_R(p) .
$$
This condition is also known as relative denseness of the set $D$ in
$X$. It is clear that in this case (\ref{PUC}) amounts to
$$
\| \phi \|^2\le \kappa^{-1}\|\phi 1_D\|^2\mbox{  for all }\phi\in {\rm Ran}(P_I)
$$
meaning that we have a quantitative unique continuation result for
linear combinations of eigenfunctions with eigenvalues in $I$ and
more general functions in the range of the corresponding spectral
projection. Here  $\kappa$ depends on $I$ and the optimal $R$ that
satisfies the above covering condition, see \cite{BLS-10}. As explained above, such
unique continuation estimates are somewhat astonishing in the graph
setting since graph Laplacians can exhibit compactly supported
eigenfunctions. However, our main result does not exclude such
compactly supported eigenfunctions and applies to graphs where the
latter occur. This is not a contradiction as our result only applies
to energy intervals $I$ concentrated near $0$.

The idea of our method can be summarized as follows. Sending
$t\to\infty$ in (\ref{shift}) with $W=1_D$ we see that the maximal
energy range for which (\ref{PUC}) gives nontrivial results is
determined by
$$
\sup_{t>0}\min \sigma(H+t1_D)\stackrel{!}{=}\min \sigma(H+\infty 1_D) ,
$$
where the non-densely defined form sum
$$
H+\infty 1_D=:H_\Omega
$$
is the Dirichlet Laplacian (in a suitable sense)  on
$\Omega:=X\setminus D$.

Our first task is therefore to get lower bounds for this $H_\Omega$
in terms of geometric quantities of the underlying graph and the
sets $\Omega$ and $D$, respectively. This is discussed  in Section
\ref{Dirichlet}.  Theorem \ref{Dir-low}  shows
$$
H_\Omega \ge \frac{1}{R\cdot \sup\{ \vol(B_R(p))\mid p\in D\}}
$$
where $R={\rm Inr}(\Omega)$ is the inradius of $\Omega$, see
Section~\ref{setup} below for the definition of volume in our
weighted graph setting. This bound is a generalization of a
well-known  bound  for finite graphs to infinite geometries under
some mild assumptions on the weighted graph.  In our proof of the
theorem, we reduce the infinite graph to a disjoint union of finite
graphs and this is a crucial step in our approach. It is achieved
 via a Voronoi type decomposition. The existence of such a
decomposition may be of interest in other contexts as well. To show
this existence   we need the rather careful analysis of basic
features of  the underlying geometry provided in Section
\ref{setup}.  Note that the bound in the theorem is weaker than what
is known in the euclidean case for $\mathbb{R}^N$, where the
corresponding Dirichlet Laplacian is bounded below by $const R^{-2}$
for domains with nice enough boundary, see Theorem 1.5.8 in Davies
\cite{Davies}. However, our bound is optimal up to constants, as  was pointed
out to us by A. Grigor'yan: Theorem 4.1 from \cite{BCG} contains examples when
this estimate is optimal for balls. Thank you, Sasha!

In order to use (\ref{PUC}) we will further need to control the
convergence of $\min \sigma(H+t1_D)$ to $\min \sigma(H_\Omega)$.
Luckily, we are in a discrete situation since the corresponding
convergence would not be true in euclidean space. In our case, under
the assumption that our Laplacian $H$ is bounded, we get convergence
in norm resolvent sense for the operators and with an explicit
convergence rate of optimal order, as shown in Section \ref{Schur}
below.

It is then easy to put things together in Section~\ref{unique} and obtain our main results,
Theorem \ref{main} and its corollary,  giving a version of
\eqref{PUC} for the case at hand with explicit control over $\kappa$
in terms of the geometry.

In Section~\ref{combgraphs} we further discuss the case of combinatorial graphs and, in particular,
compare our approach to lower bounds for $H_{\Omega}$ in Theorem~\ref{Dir-low} with the approach via
Cheeger inequalities. In Section~\ref{potentials} we discuss how Theorem~\ref{Dir-low} can
be extended to certain cases where a potential is added to $H_{\Omega}$.

Finally we mention an upcoming companion paper \cite{stolzman2} dealing with the continuum case of
Laplacians and more general divergence form operators.

\vspace{.3cm}

\section{The set-up}\label{setup}
We start by introducing our basic set-up within the context of weighted graphs, see \cite{Ke}
for a recent survey of this and related topics. A weighted graph $(X,b,m)$ is given by
\begin{itemize}
 \item a countable set $X$, finite or infinite;
 \item a symmetric weight function $b:X\times X\to [0,\infty)$ with $b(x,x)=0$ for all $x\in
 X$ and $\sum_{y\in X} b(x,y) < \infty$ for all $x\in X$;
 \item a weight function $m:X\to (0,\infty)$.
\end{itemize}
Here $m$ induces a measure on $X$ through
$$
\vol(A):=m(A):=\sum_{x\in A}m(x) .
$$
Our basic Hilbert space will be
$$
\ell^2(X,m):=\{ f\in\mathbb{C}^X\mid \|f\|^2 = \sum_{x\in X}|f(x)|^2m(x)<\infty\} .
$$
The function $b$ above should be thought of as a weight on the edges
and it appears in the energy form of the Laplacian as well as in the
distance we define on $X$. More precisely, we consider the
nonnegative form
$$
\en(f,g):= \frac12 \sum_{x,y\in X}b(x,y)(f(x)-f(y))(\overline{g(x)}-\overline{g(y)}) .
$$
We will always assume  the boundedness condition
\begin{itemize}
\item[(B)] $\sup_{x\in X} \frac{1}{m(x)}\sum_{y\in X}b(x,y)=:\delta <\infty.$
\end{itemize}
This condition  is equivalent to boundedness of the form and
consequently, the associated selfadjoint operator $H$; more
precisely, $\| H\|\le 2 \delta$. See \cite{HKLW12}, Thm.\ 9.3 and the
literature cited there. The associated selfadjoint operator is known
as \textit{weighted Laplacian} and   given by
$$
(Hf)(x)= \frac{1}{m(x)}\sum_{y\in X}b(x,y)(f(x)-f(y))
$$
for $f\in\ell^2 (X,m)$ and $x\in X$,  as follows from \cite{KL},
Theorem 5.

For us the relevant  distance on $X$ is given in the following way:
an \emph{edge} of the weighted graph $(X,b,m)$ is a set $\{ x,y\}$
with positive weight $b(x,y)>0$. Denote by $E$ the set of all edges.
Clearly, that induces the structure of a combinatorial graph
$(X,E)$. A \emph{path} is a finite sequence of edges with nonempty
intersections that can most easily be written as
$\gamma=(x_0,x_1,...,x_k)$ where $b(x_j,x_{j+1})>0$ for all
$j=0,...,k-1$; if we want to specify the endpoints we say that
$\gamma$ is a path from $x_0$ to $x_k$.  The \emph{length} of such a
path  $\gamma$ is given by
$$
L(\gamma):=\sum_{j=0,...,k-1}\frac{1}{b(x_j,x_{j-1})} .
$$
In particular the length of an edge $\{x,y\}$ is given by
$\frac{1}{b(x,y)}$. To include trivial cases we also allow trivial
paths $(x,x)$ from $x$ to $x$ whose length is $0$. We will throughout assume that our graph is
\emph{connected} in the sense that every pair of points is connected
by a path. The \emph{distance} between $x$ and $y$ is given by
$$
d(x,y):=\inf\{ L(\gamma)\mid \gamma\mbox{  a path from }x\mbox{  to
}y\} .
$$
Clearly, $d$ is symmetric and satisfies the triangle inequality.
Moreover,  by the assumptions on $b$ we have for any $x\in X$ the
estimate
$$\sup_{z} b(x,z) \leq  \sum_{z} b(x,z) < \infty$$
and this implies  $d(x,y)\geq \frac{1}{\sup_{z} b(x,z)} >0$ for any
$y\in X$ with $y\neq x$. So,  we see that $d$ separates the points
and hence we get that $d$ is a metric. We denote by
$$ U_r(x):= \{y\in X \mid d(x,y) < r\} \;\mbox{ and }\; B_r(x):= \{y\in
X \mid d(x,y) \le r\}$$ the  open and closed balls of radius $r$,
respectively.  We note in passing that, while the graph $(X,E)$ is connected,  $X$ is totally
disconnected in the topological sense as it is discrete (by what we
have just shown).

For our later considerations we will need  the Heine-Borel property
i.e.  that closed  balls in $X$ are compact. As our space has
discrete topology this is equivalent to the following finiteness
condition:
\begin{itemize}
\item[(F)] For any $x\in X$ and $r>0$ the set $B_r (x)$ is finite.
\end{itemize}
We will also need that $X$ is
 \textit{geodesic} in the sense that the following condition
holds:
\begin{itemize}
\item[(G)] Between any $x,y\in X$ there exists a path $\gamma =(x_0,x_1\ldots,
x_k)$ with $x_0 = x$, $x_k = y$ and $d(x,y) = L(\gamma)$.
\end{itemize}
It is easy  to see that (F) implies (G) (compare also  proof of part
(d) of Proposition \ref{prop-homogen} below). In fact even the
converse is true \cite{HKMW}, see Remark \ref{discussion-topology}
below as well.

By what we already mentioned in the introduction, the volume of
balls will enter our results as one important quantity. In
particular, we will need uniform bounds  on the volumes  of balls of
fixed radius:
\begin{itemize}
\item[(V)] For any $r\geq 0$ the inequality $\sup_{x\in X} m(B_r(x))<
\infty$ holds.
\end{itemize}

As $r=0$ is possible, the previous  condition clearly implies a
uniform bound on $m$ in the following form:

\begin{itemize}
\item[(M)] $m_{max}:=\sup_{x\in X} m(x) <\infty$
\end{itemize}
Given (B), it turns out that (M) alone already implies (F),   (G)
and (V). In fact, (B) and (M) together can be seen to imply a rather
homogeneous geometry. This is discussed next.  The crucial point is
that  (B) and (M) together imply a uniform upper bound for $b$ (and
even for the \textit{ vertex degree} $\mbox{deg}(x) = \sum_{y}
b(x,y)$):
$$
b_{max}:=\sup_{x,y\in X} b(x,y)\le \sup_{x\in X}\sum_{y\in
X}b(x,y)\le \delta \cdot m_{max} .
$$

\begin{prop}[Homogeneity of the geometry] \label{prop-homogen}
 Let $(X,b,m)$ be as above, in particular connected and such that
(B) and (M) hold. Then
 \begin{itemize}
\item [(a)] For any path $\gamma$ we have $L(\gamma) \ge b_{max}^{-1}\# \gamma$,
where $\#$ indicates the  cardinality. In particular, $X$ is uniform
discrete; more precisely   any two different points $x,y\in X$
  have uniform  distance at least $\frac{1}{b_{max}}$.

\item[(b)] $(X,d)$ is  locally compact and complete.
  \item[(c)] The condition (F) holds in a very uniform manner. More
  specifically,
  $$
  \# B_r(x)\le \left( r\cdot \delta \cdot m_{max}\right)^{r\cdot b_{max}}
  +1
  $$
for any $r\geq 0$ and $x\in X$.  In particular, (V) holds.

   \item[(d)]  $(X,d)$ is geodesic, i.e. (G) holds.
 \end{itemize}
\end{prop}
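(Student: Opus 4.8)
The plan is to derive all four parts from the two quantitative facts already recorded in the set-up: edges have length at least $1/b_{max}$, and the weighted degree is controlled via $\sum_{y} b(x,y) \le \delta \cdot m_{max}$ (so in particular $b_{max} \le \delta \cdot m_{max}$). Part (a) is then immediate: every edge $\{x_j,x_{j+1}\}$ of a path has length $1/b(x_j,x_{j+1}) \ge 1/b_{max}$, so summing over the $k$ edges gives $L(\gamma) \ge k/b_{max} = b_{max}^{-1}\#\gamma$. Applying this to a one-edge path and taking the infimum over all paths joining two distinct points shows $d(x,y) \ge 1/b_{max}$, i.e. uniform discreteness.

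For part (b) I would exploit that uniform discreteness makes the metric topology discrete. Each singleton $\{x\} = U_{1/b_{max}}(x)$ is then open and, being finite, compact, so $(X,d)$ is locally compact. Completeness is equally direct: any Cauchy sequence eventually has all of its terms within distance $< 1/b_{max}$ of one another, and by part (a) this forces it to be eventually constant, hence convergent.

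Part (c) is the heart of the matter and where I expect the real work. Fix $x$ and $r>0$ and take $y \in B_r(x)$ with $y \ne x$. Choosing a near-geodesic path $\gamma$ from $x$ to $y$ with $L(\gamma)$ only slightly above $d(x,y)\le r$, two things combine: by part (a) the number of edges of $\gamma$ is at most about $r\,b_{max}$, and since the total length is at most $r$ each single edge of $\gamma$ has weight at least $1/r$ (up to the near-geodesic slack). The key local estimate is that at any vertex $z$ the number of neighbors reachable by an edge of weight $\ge 1/r$ is bounded, since $\frac{1}{r}\cdot\#\{w : b(z,w)\ge 1/r\} \le \sum_{w} b(z,w) \le \delta\cdot m_{max}$ yields at most $r\,\delta\,m_{max}$ such neighbors. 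Thus every point of $B_r(x)\setminus\{x\}$ is the endpoint of a walk of at most $r\,b_{max}$ steps in which each step offers at most $r\,\delta\,m_{max}$ choices; padding such a walk to a fixed number of steps by oscillating along its last edge injects $B_r(x)\setminus\{x\}$ into the set of all such walks, whose cardinality is at most $(r\,\delta\,m_{max})^{r\,b_{max}}$. This produces the stated bound $\#B_r(x)\le (r\,\delta\,m_{max})^{r\,b_{max}}+1$, and hence (V). The main obstacle is exactly this counting: obtaining the local branching bound through the ``weight $\ge 1/r$'' dichotomy and combining it cleanly with the edge-count bound, while taking care that the distance need not yet be realized by an actual path (so one argues with near-geodesics and absorbs the slack into the constant).

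Finally, for part (d) I would deduce (G) from part (a) together with the finiteness just established. Given $x,y$, every path $\gamma$ with $L(\gamma)\le d(x,y)+1$ keeps all its vertices inside the finite ball $B_{d(x,y)+1}(x)$ and, by part (a), uses at most $(d(x,y)+1)\,b_{max}$ edges; there are therefore only finitely many such paths, so the infimum defining $d(x,y)$ is attained by one of them, which is the desired geodesic.
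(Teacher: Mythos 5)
Your proposal is correct and follows essentially the same route as the paper: edge lengths bounded below by $1/b_{max}$ give (a) and hence discreteness for (b), the ball is counted via paths with at most $r\cdot b_{max}$ edges and local branching at most $r\cdot\delta\cdot m_{max}$ (from $\frac{1}{r}N_r(p)\le\sum_z b(p,z)\le\delta m_{max}$) for (c), and finiteness of balls reduces the infimum in (d) to a minimum over finitely many paths. Your treatment is in fact slightly more careful than the paper's on the points it glosses over (working with near-geodesics before (G) is available, and the padding trick to get a single power rather than a geometric sum).
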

\begin{proof} (a) As $b(x,y) \leq b_{max}$ for all $x,y$ we see that
any two different points have minimal distance $\frac{1}{b_{max}}$.
This gives the last part of the statement of (a). Now, the first
part follows directly.

\smallskip

(b) This follows as different  points have a minimal distance by
(a).

\smallskip

(c) By (a) the points in $B_r (x)$ can be reached from $x$ by paths
with not more than $r \cdot b_{max}$ edges. Moreover, in  the
relevant paths no edge can be   longer than $r$. Thus, we will just
estimate the number of path with not more than  $r \cdot  b_{max}$
edges of length not exceeding $r$. Now, by (B)  and (M)  the number
$N_r (p)$ of edges going out from an arbitrary $p\in X$ with length
not exceeding $r$ is bounded by
$$N_r (p) \cdot  \frac{1}{r} \leq \sum_{z\in X} b(p,z) \leq \delta m_{max}.$$
The preceding considerations directly imply the given bound for
$\# B_r (x)$.   From  (M) we then obtain (V).

\smallskip

(d) By (c) any ball has only finitely many points. Consider now
arbitrary $x,y\in X$ and set $r := d(x,y)$. Then, $y$ belongs to $
B_{r+1} (x)$. By (b) the ball $B_{r+1} (x)$ has only finitely many
points. Thus, there exist only finitely many paths in $B_{r+1} (x)$
and every path from $x$ to $y$ with length less than $d(x,y) +1$
lies completely in $B_{r+1}(x)$. So the infimum over the lengths of
all paths between $x$ and $y$ can be calculated by taking the
minimum over the  lengths of paths  between $x$ and $y$ in $B_{r+1}
(x)$ and this implies (d).

This easily gives the desired statement.
\end{proof}

Our \textbf{setting for the remaining part of the paper} will be a
connected $(X,b,m)$ such that (B) and (M) hold.  By the previous
proposition, this will imply validity of (F), (G) and (V).

\smallskip

Of course, connectedness is not a real issue: if the graph is not connected it decomposes into
connected clusters and the Laplacian will just be the direct sum of the Laplacians on the
corresponding clusters. Hence our statements will remain true if properly adapted. The only change is
that $d$ as defined above is no longer a metric in the sense that the value infinity might occur.

\medskip

Although our setting allows for more general weighted graphs, readers
may always assume that we are dealing with usual combinatorial
graphs and the associated Laplacians. Our results are relevant and
new in this more specialized setting as well for which we now single
out two particularly important classes. Note that the usual
euclidean lattices belong to the first  class of examples and - up
to a multiplication of the measure by a constant  - also to the
second class of examples.

\begin{example}[Combinatorial situation] \label{combsit}
Starting from a combinatorial graph $G=(X,E)$ we set $b(x,y)=1$
whenever there is an edge from $x$ to $y$ and $b(x,y)=0$ else and
$m=1$. Then our Laplacian agrees with the usual graph Laplacian and
the distance is the well-known combinatorial or graph distance. Our
basic assumptions are satisfied if and only if  $G$ is connected and
the vertex degree is uniformly bounded.
\end{example}
\begin{example}[Normalized situation]
Let $X$ be an arbitrary countable  set with more than one element
and let $b : X\times X\longrightarrow [0,\infty)$ be symmetric with
$b(x,x) = 0$ and $\sum_{y} b(x,y) < \infty$ for all $x\in X$. Assume
that $X$ is connected and define
$$m: X\longrightarrow [0,\infty), m(x) :=\sum_{y\in X} b(x,y).$$
Due to connectedness there must exist from any $x\in X$ a $y\in X$
with $b(x,y) >0$ and we find $m(x) >0$ for any $x\in X$. As is clear
from the the construction the condition (B) holds (with $\delta =1$). In
particular,  the form $\mathcal{E}$ and the operator $H$  are
automatically bounded in this situation. So, in this case the basic
assumption is satisfied if and only if  the graph is connected and
$m$ is bounded.
\end{example}

\begin{rem}\label{discussion-topology} The metric $d$ and related
metrics are sometimes discussed under the name of \textit{path
metrics} on graphs. They  have appeared   in various  places. A
study of topological features of graphs equipped with $d$  is given
in \cite{Geo}. Completeness of the space $X$ equipped with respect
to path metrics  has played a role in recent investigations of
essential selfadjointness of Laplacians on graphs,
\cite{TH,Mil,HKMW}.  An important step in the considerations of
\cite{Mil} gives that completeness with respect to a certain path
metric implies finiteness of metric balls.   This was generalized in
\cite{HKMW} to a  Hopf-Rinow type theorem giving that for any path
metric completeness of $X$  is equivalent to finiteness of metric
balls and implies existence of geodesics. A further discussion of
$d$ and other  metrics in the context of suitable (pre)compactness
conditions for graphs is given in \cite{GHKLW}. Our framework given
by (B) and (M) and the consequences for the geometry seem not to
have been studied before.
\end{rem}

\section{Lower bounds for the Dirichlet Laplacian}\label{Dirichlet}
From the introduction we know that an interesting situation to study
is that on $D\subset X$ we have an infinite potential. Denoting by
$\Omega:=X\setminus D$ we get the form
$$
\en_\Omega(\cdot,\cdot)=\en(\cdot,\cdot)\mbox{  on
}\dom(\en_\Omega)=\{ f\in\dom(\en) = \ell^2 (X,m) \mid f=0\mbox{ on
}D\}
$$
as the limit in the strong resolvent sense of
$$
H+t1_D\mbox{  as  }t\to\infty,
$$
(see Section \ref{Schur} for further details).

 We identify $\ell^2(\Omega,m)$ with $\{ f\in\ell^2(X,m)\mid
f=0\mbox{ on }D\}$ and get an associated selfadjoint operator
$H_\Omega$ defined on  $\ell^2(\Omega,m)$. For us, $\en_\Omega$ and
$H_\Omega$ will be the restriction of the energy form and the
Laplacian, respectively, to $\Omega$ with \emph{Dirichlet boundary
conditions}.

\begin{rem}
\begin{itemize}
\item[(a)]  It is quite reasonable to call $H_\Omega$ the Dirichlet
Laplacian. In the continuum euclidean case, under mild regularity
assumptions on the boundary of $\Omega$, open in $\mathbb{R}^N$, it
holds that
  $$
-\Delta_\Omega=\lim_{t\to\infty} (-\Delta +
t1_{\mathbb{R}^N\setminus \Omega})
  $$
  is the Dirichlet Laplacian and the convergence holds in the strong resolvent sense, see Section \ref{Schur} for references.

\item[(b)] It is also reasonable to call another operator the
Dirichlet Laplacian in the discrete case, see for example
Section~5.2 in \cite{Kirsch}. This is adopted by many authors who
add a penalty term in order to force the subadditivity known from
the continuum case. In fact our Dirichlet Laplacian does in general
not  obey that $H_{U\cup V}$ is smaller than $H_U\oplus
 H_V$ for disjoint $U$ and $V$.
 \end{itemize}
\end{rem}
In our application to unique continuation the main role is played by
$D$, the set on which the potential barrier is given. For the
present section we slightly change the point of view and concentrate
on the set $\Omega\subset X$. We define the \emph{inradius} of
$\Omega$ by
$$
\inr(\Omega):=\sup\{ r>0\mid \exists x\in\Omega: U_r(x)\subset\Omega\} .
$$
We are particularly interested in
$$
\lambda_\Omega:= \min\sigma (H_\Omega) ,
$$
the bottom of the spectrum of the Dirichlet Laplacian $H_\Omega$.
Note that this latter operator is defined on $\ell^2(\Omega)$ but
its definition is always to be understood relative to the bigger
ambient graph $X$.

\smallskip

We will first deal with the finite volume situation in the next
theorem.

\begin{theorem}\label{Cheeger-fin}
Let $(X,b,m)$ be as above, in particular connected and such that (B)
and (M) hold. Let  a non-empty $\Omega\subset X$ with $\Omega \neq
X$ be given and assume  $\vol(\Omega)<\infty$ and
$\inr(\Omega)<\infty$.
\begin{itemize}
 \item [{\rm (a)}]We have
 $$
 \lambda_\Omega\ge \frac{1}{\inr(\Omega)\vol(\Omega)} .
 $$
 \item[{\rm (b)}]If  $\vol(X)<\infty$, then
 $$
 \lambda_\Omega\le \| H\|\frac{\vol(X)-\vol(\Omega)}{\vol(X)} .
 $$
\end{itemize}
\end{theorem}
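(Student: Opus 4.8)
The plan is to prove both bounds via the variational characterization $\lambda_\Omega = \inf\{\en(f,f)/\|f\|^2 \mid f\in\dom(\en_\Omega),\ f\neq 0\}$, where $\dom(\en_\Omega)$ consists of $\ell^2$-functions vanishing on $D = X\setminus\Omega$. For the lower bound (a), the strategy is to test against the distance function. Let me define $w(x) := d(x,D) = \inf_{p\in D} d(x,p)$ on $\Omega$, extended by $0$ on $D$; this is a natural "capacity potential" that vanishes on the boundary and grows into the interior. By the definition of the inradius we have $0\le w(x)\le \inr(\Omega)=:R$ for all $x$, since any point of $\Omega$ admits a ball of radius close to $R$ inside $\Omega$ is the obstruction to $w$ being large — more precisely $w(x)< \inr(\Omega)$ cannot hold for all points simultaneously in a sharp way, so I expect to need $w(x)\le R$ as the key uniform bound. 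The crucial geometric estimate is a \emph{discrete gradient bound}: for an edge $\{x,y\}$ with $b(x,y)>0$, the length of that edge is $1/b(x,y)$, so $|w(x)-w(y)|\le d(x,y)\le 1/b(x,y)$ by the triangle inequality for $d$. This means $b(x,y)(w(x)-w(y))^2 \le b(x,y)\cdot (1/b(x,y))^2\cdot b(x,y) = (w(x)-w(y))^2/\ (1/b(x,y))\cdot\ldots$ — the clean form I want is $b(x,y)\,|w(x)-w(y)|\le |w(x)-w(y)|\cdot b(x,y)\le 1$, giving $\en(w,w)=\frac12\sum_{x,y}b(x,y)(w(x)-w(y))^2 \le \frac12\sum_{x,y} b(x,y)\,|w(x)-w(y)|\cdot \frac{1}{b(x,y)}\cdot b(x,y)$; after organizing, $\en(w,w)\le$ a sum controlled by $\sum b(x,y)|w(x)-w(y)|$ which telescopes.

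The cleaner route, which I would actually carry out, avoids choosing $w$ directly and instead uses it to bound the Rayleigh quotient from below by a "co-area"/summation-by-parts argument. I would write, for any admissible $f$, the identity coming from $\en(f,w)$ or rather estimate $\|f\|^2 = \sum_x |f(x)|^2 m(x)$ against $\en(f,f)$ through $w$. Concretely, using $w\le R$ and the gradient bound $b(x,y)|w(x)-w(y)|\le 1$, I expect the chain
$$
\|f\|^2 = \sum_{x\in\Omega}|f(x)|^2 m(x) \le \vol(\Omega)\cdot \max_x |f(x)|^2
$$
to be too lossy; instead the right move is to bound $\max_x|f(x)|^2$ by $\en(f,f)\cdot R$ via a telescoping path estimate along a geodesic from $x$ to $D$ (which exists by (G)), since $f$ vanishes on $D$. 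This is the heart of the matter: for a geodesic $\gamma=(x_0,\dots,x_k)$ from $x\in\Omega$ to $x_k\in D$, write $f(x)=f(x_0)=\sum_{j}(f(x_j)-f(x_{j+1}))$, apply Cauchy--Schwarz with weights $b(x_j,x_{j+1})$ and their reciprocals (the edge lengths), obtaining $|f(x)|^2\le \big(\sum_j \tfrac{1}{b(x_j,x_{j+1})}\big)\big(\sum_j b(x_j,x_{j+1})|f(x_j)-f(x_{j+1})|^2\big)\le L(\gamma)\cdot 2\en(f,f)= d(x,D)\cdot 2\en(f,f)\le R\cdot 2\en(f,f)$. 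Summing $|f(x)|^2 m(x)$ over $\Omega$ then yields $\|f\|^2\le 2R\,\vol(\Omega)\,\en(f,f)$, and I anticipate a factor-of-two bookkeeping issue to be reconciled against the stated constant $\frac{1}{R\vol(\Omega)}$, so the precise handling of the $\frac12$ in $\en$ and whether one sums ordered or unordered pairs is the one place demanding care.

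For the upper bound (b), when $\vol(X)<\infty$ the constant function $\mathbf{1}$ lies in $\ell^2(X,m)$ but is \emph{not} admissible for $\en_\Omega$ since it does not vanish on $D$; so I would instead test with $g := \mathbf{1}_\Omega$, the indicator of $\Omega$, which does vanish on $D$ and satisfies $\|g\|^2=\vol(\Omega)$. Then $\lambda_\Omega\le \en(g,g)/\vol(\Omega)$, and $\en(g,g)=\frac12\sum_{x,y}b(x,y)(\mathbf{1}_\Omega(x)-\mathbf{1}_\Omega(y))^2$ counts only boundary edges between $\Omega$ and $D$. I expect to bound this boundary energy crudely by the total energy available, using $\en(g,g)\le \langle Hg,g\rangle$-type estimates together with $\|H\|\le 2\delta$; the target form $\|H\|\,\vol(X)^{-1}(\vol(X)-\vol(\Omega))$ suggests rewriting via $\mathbf{1}_\Omega=\mathbf{1}-\mathbf{1}_D$ and exploiting that $\en(\mathbf{1},\cdot)=0$ (constants have zero energy), so that $\en(\mathbf{1}_\Omega,\mathbf{1}_\Omega)=\en(\mathbf{1}_D,\mathbf{1}_D)\le \|H\|\,\|\mathbf{1}_D\|^2=\|H\|(\vol(X)-\vol(\Omega))$, dividing by $\|\mathbf{1}_\Omega\|^2=\vol(\Omega)$ gives a bound with $\vol(\Omega)$ in the denominator rather than $\vol(X)$; reconciling this with the stated $\vol(X)$ denominator is the subtle point, and I suspect the intended test function is a suitably normalized combination making $\vol(X)$ appear, which I would pin down by optimizing. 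The main obstacle overall is the lower bound's geometric telescoping estimate and getting its constant sharp; the upper bound is routine once the right test function is identified.
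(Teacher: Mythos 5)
Your part (a) is essentially the paper's own argument: telescope $f(x)-f(x_0)$ along a shortest path from $x\in\Omega$ to a point of $D$ (where $f$ vanishes), apply Cauchy--Schwarz with the weights $\sqrt{b(x_j,x_{j+1})}$ and their reciprocals, and then use $\|f\|^2\le \vol(\Omega)\sup_x|f(x)|^2$. The factor-of-two worry you flag resolves in your favor and is not a real obstacle: a shortest path uses each unordered edge at most once, and the $\tfrac12$ in the definition of $\en$ exactly compensates the double count over ordered pairs, so $\sum_j b(x_j,x_{j+1})|f(x_{j+1})-f(x_j)|^2\le\en(f,f)$ with no extra factor of $2$; combined with $d(x,D)\le\inr(\Omega)$ (equivalently, the paper works with an arbitrary $R>\inr(\Omega)$ and lets $R\downarrow\inr(\Omega)$) this gives precisely $\|f\|^2\le\inr(\Omega)\,\vol(\Omega)\,\en(f,f)$. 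The preliminary detour through $w=d(\cdot,D)$ in your first paragraph is not needed and can be dropped.

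Part (b), as written, does not reach the stated bound: your chain $\en(1_\Omega,1_\Omega)=\en(1_D,1_D)\le\|H\|\,\|1_D\|^2$ followed by division by $\|1_\Omega\|^2=\vol(\Omega)$ yields $\lambda_\Omega\le\|H\|\,(\vol(X)-\vol(\Omega))/\vol(\Omega)$, which is strictly weaker than the claimed inequality since $\vol(\Omega)<\vol(X)$. The missing idea is not a different test function --- $1_\Omega$ is the right one --- but a sharper energy estimate exploiting that constants are in the kernel of $H$. Let $P_0$ be the orthogonal projection onto the constants; since $H P_0=0$ and $P_0$ commutes with $H$, one has $\en(\phi,\phi)=\en(\phi-P_0\phi,\phi-P_0\phi)\le\|H\|\,\|\phi-P_0\phi\|^2$. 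Taking $\phi=\vol(\Omega)^{-1/2}1_\Omega$ gives $\|P_0\phi\|^2=\vol(\Omega)/\vol(X)$ and hence $\|\phi-P_0\phi\|^2=(\vol(X)-\vol(\Omega))/\vol(X)$ by Pythagoras, which is exactly the stated bound. Equivalently, in your formulation: replace $\en(1_D,1_D)\le\|H\|\,\|1_D\|^2$ by $\en(1_D,1_D)\le\|H\|\,\|1_D-P_01_D\|^2=\|H\|\,\vol(D)\vol(\Omega)/\vol(X)$, and then divide by $\vol(\Omega)$; this produces the $\vol(X)$ in the denominator. This is the one genuine gap in the proposal; the rest is sound.
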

\begin{proof}
Ad (a): Let $R>\inr(\Omega)$. Let $f\in\dom(\en_\Omega)$ and
$x\in\Omega$. By definition of the inradius there is $x_0\in
U_R(x)\setminus\Omega$. In particular, there is a path
$\gamma=(x_0,...,x_k)$ from $x_0$ to $x=x_k$ of length at most $R$
and $f(x_0)=0$. Therefore,
 \begin{eqnarray*}
  |f(x)|^2&=&|f(x)-f(x_0)|^2\\
  &=&|\sum_{j=0}^{k-1}\sqrt{b(x_j,x_{j+1})}(f(x_{j+1})-f(x_j))\frac{1}{\sqrt{b(x_j,x_{j+1})}}|^2\\
  &\le& \sum_{j=0}^{k-1}b(x_j,x_{j+1})|f(x_{j+1})-f(x_j)|^2\sum_{j=0}^{k-1}\frac{1}{b(x_j,x_{j+1})}\\
  &\le& \en_\Omega(f,f)R.
  \end{eqnarray*}
Since $\| f\|^2\le \sup_{x\in\Omega}|f(x)|^2\cdot\vol(\Omega)$ we get
$$
\| f\|^2 \le R\cdot\vol(\Omega)\cdot\en_\Omega(f,f)
$$
and this is the desired lower bound, since $R>\inr(\Omega)$ was arbitrary.

Ad (b): Note that $\en(1,1)=0$ under these conditions. We will
define a suitable trial function. In fact, let $\phi=c\cdot
1_\Omega$ normalized, so that $c=\vol(\Omega)^{-\frac12}$. Since $H$
and consequently $H_\Omega$ is bounded, $\phi\in\dom(\en_\Omega)$.
We will estimate the energy of $\phi$ by calculating the projection
$\phi_0=P_0\phi$, where $P_0$ is the orthogonal projection onto the
constant functions and hence leaves $H$ invariant. We get
$$
\phi_0=\frac{1}{\vol(X)}\langle \phi, 1_X\rangle 1_X\mbox{  with  }\langle \phi, 1_X\rangle =\sum_{x\in\Omega}cm(x)=\vol(\Omega)^\frac12
$$
and therefore
$$
\| \phi_0\|^2=\frac{\vol(\Omega)}{\vol(X)}\mbox{  and  }\|\phi-\phi_0\|^2=1-\|\phi_0\|^2=\frac{\vol(X)-\vol(\Omega)}{\vol(X)} .
$$
Since $H\phi_0=0$,
$$
\en(\phi,\phi)=\en(\phi-\phi_0,\phi-\phi_0)\le \|
H\|\|\phi-\phi_0\|^2=\| H\|\frac{\vol(X)-\vol(\Omega)}{\vol(X)}.
$$
As $\phi$ is supported in $\Omega$ we have  $\en_\Omega (\phi,\phi)
= \en (\phi,\phi)$ by the definition of $\en_\Omega$ and the
preceding estimate gives  $\lambda_\Omega$ of $H_\Omega$.
\end{proof}
\begin{rem} \label{ChuDavies}
 \begin{enumerate}
  \item[(a)]
For finite combinatorial graphs, the lower bound is a familiar bound
and our proof follows known lines, compare Lemma 1.9 in \cite{Chu} and Lemma 2.4 in \cite{BCG} for
related estimates.

\item[(b)]
The upper bound is interesting as it shows that a lower bound like
in the continuum euclidean case for domains with nice enough boundary, namely in the form
$const\cdot\inr(\Omega)^{-2}$, see Theorem 1.5.8 in \cite{Davies},
will not be possible! Indeed, for any connected  finite graph $X$ we
can take $D$ to consist of just a single element of $X$. Then,
$$\lambda_\Omega \leq \|H\|\frac{\vol(X)-\vol(\Omega)}{\vol(X)} \leq  \|H\|
\frac{m_{max}}{\vol(\Omega)}$$ will be bounded in terms of the
inverse volume of $\Omega$. Now, this can be  much smaller than a
second power of the inverse inner radius as can be seen by considering e.g.\
a ball in an $N$-dimensional  euclidean lattice with $N\geq 3$  and
choosing as set $D$ just the center of this ball.

\item[(c)] As mentioned in the introduction, Theorem 4.1 from \cite{BCG} gives that
the lower bound is optimal up to constants.
  \end{enumerate}
\end{rem}

To lift the above result to the case of infinite volume, we
introduce the concept of a Voronoi decomposition. This concept
may be of interest in other contexts as well. In fact, for
combinatorial Laplacians it has already proven useful in
\cite{SSV-14}.
\begin{definition}\label{def-vor}
Let $(X,b,m)$ be as above and  $D\subset X$ non-empty. A
\emph{Voronoi decomposition} of $X$ with centers from $D$ is a
pairwise disjoint family $(V_p)_{p\in D}$ such that  following
conditions hold:
 \begin{enumerate}
  \item[(V1)] For each $p\in D$ the point $p$ belongs to  $ V_p$  and
  for all $x\in V_p$ there exists a path $\gamma$ from $p$ to $x$ that lies
   in $V_p$ and satisfies  $L(\gamma) = d(p,x)$.
  \item[(V2)] For each $p\in D$ and for all  $x\in V_p$ the inequality
  $d(p,x) \leq  d(q,x)$ holds  for any $q\in D$.
  \item[(V3)] $\bigcup_{p\in D}V_p=X$.
 \end{enumerate}
\end{definition}

\begin{rem} The condition (V1) and (V2)  imply that for any $p\in D$
\begin{itemize}
\item the set   $V_p$ contains $p$ and is connected and
  \item  any $x\in V_p$ satisfies $d(p,x)\leq d(q,x)$ for any $q\in D$.
\end{itemize}
However, it is not hard to see by examples that (V1) and (V2) are
even stronger than these two conditions, i.e.\ that connectedness of the $V_p$ does not imply that they contain geodesics.
\end{rem}

In our investigation of Voronoi decompositions, we will need some
further concepts. We define the \textit{covering radius} of $D$ by
$$
\covr(D):=\inf\{ R>0\mid \bigcup_{p\in D}B_R(p)=X \} \in [0;\infty] ,
$$
with the usual convention $\inf\emptyset =\infty$ and say that $D$
is \emph{relatively dense}, provided $\covr(D) < \infty$.

\begin{lemma}\label{lem-covr-equal-inr}
 Let $(X,b,m)$ be as above, $D\subset X$ and $\Omega:=X\setminus D$. Then
 $$\covr(D)=\inr(\Omega) .$$
\end{lemma}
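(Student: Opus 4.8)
The plan is to prove the two inequalities $\covr(D) \le \inr(\Omega)$ and $\inr(\Omega) \le \covr(D)$ separately by unwinding the definitions. Recall that $\inr(\Omega) = \sup\{r > 0 \mid \exists x \in \Omega : U_r(x) \subset \Omega\}$ and $\covr(D) = \inf\{R > 0 \mid \bigcup_{p \in D} B_R(p) = X\}$, where $\Omega = X \setminus D$. The key observation to exploit is the set-theoretic duality: $U_r(x) \subset \Omega$ means $U_r(x) \cap D = \emptyset$, i.e. no point of $D$ lies within distance strictly less than $r$ of $x$, which is to say $d(x,p) \ge r$ for all $p \in D$. Dually, the covering condition $\bigcup_{p \in D} B_R(p) = X$ says every point of $X$ (in particular every point of $\Omega$) is within distance $R$ of some center in $D$.

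First I would prove $\inr(\Omega) \le \covr(D)$. Take any $R$ with $\bigcup_{p\in D} B_R(p) = X$. I claim $\inr(\Omega) \le R$. Suppose $U_r(x) \subset \Omega$ for some $x \in \Omega$; then $d(x,p) \ge r$ for every $p \in D$. But the covering condition gives some $p \in D$ with $d(x,p) \le R$, so $r \le R$. Taking the supremum over admissible $r$ yields $\inr(\Omega) \le R$, and then the infimum over admissible $R$ gives $\inr(\Omega) \le \covr(D)$. For the reverse inequality $\covr(D) \le \inr(\Omega)$, I would show that any $R > \inr(\Omega)$ satisfies the covering condition, forcing $\covr(D) \le R$ and hence $\covr(D) \le \inr(\Omega)$ after letting $R \downarrow \inr(\Omega)$. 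So fix $R > \inr(\Omega)$ and let $x \in X$ be arbitrary; I must find $p \in D$ with $d(x,p) \le R$. If $x \in D$ this is trivial (take $p = x$). If $x \in \Omega$, then since $R > \inr(\Omega)$, the ball $U_R(x)$ cannot be contained in $\Omega$, so there exists $p \in U_R(x) \cap D$, giving $d(x,p) < R \le R$. Thus $x \in \bigcup_{p\in D} B_R(p)$, proving the covering.

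The main subtlety I anticipate is the careful handling of the strict-versus-nonstrict inequalities arising from the open balls $U_r$ in the inradius against the closed balls $B_R$ in the covering radius, together with the suprema and infima. Concretely, one must be attentive to boundary cases where the supremum defining $\inr(\Omega)$ is not attained, and to the degenerate situations $D = X$ (so $\Omega = \emptyset$, giving $\inr(\emptyset) = 0 = \covr(X)$ under the convention $\sup\emptyset = 0$) and $D = \emptyset$ (where $\covr(\emptyset) = \infty = \inr(X)$). I would verify that the $\varepsilon$-arguments above survive these edge cases, which should follow cleanly from the conventions $\inf\emptyset = \infty$ already fixed in the text. The argument uses no deep structure — neither assumptions (B) nor (M) are needed — so the difficulty is purely in bookkeeping the quantifiers correctly.
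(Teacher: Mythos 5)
Your proof is correct and follows essentially the same route as the paper: both directions are obtained by unwinding the two definitions and using the $\varepsilon$-room between the open balls $U_r$ in the inradius and the closed balls $B_R$ in the covering radius (the paper phrases it via $R<\covr(D)$ and $R<\inr(\Omega)$ with an auxiliary $\widetilde R$, you via admissible $R$ and $R>\inr(\Omega)$, which is the same argument in contrapositive form). Your explicit attention to the degenerate cases $D=X$ and $D=\emptyset$ is a minor addition the paper leaves implicit.
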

\begin{proof}
Let $R<\covr(D)$ (which is set to $\infty$ if $D$ is not relatively
dense). Then $ \bigcup_{p\in D}B_R(p)\neq X$ which means that there
is $x_0\in\Omega$ with $B_R(x_0)\cap D=\emptyset$ and therefore
$\inr(\Omega)\ge R$. Consequently, $\covr(D)\le \inr(\Omega)$.

Conversely, $R<\inr(\Omega)$ gives $x_0\in\Omega$ and $R<
\widetilde{R} < \inr (\Omega)$
 s.t. $B_R(x_0) \subset U_{\widetilde{R}} (x_0) \subset\Omega$ which means that $x_0\not\in
\bigcup_{p\in D}B_R(p)$ and, therefore, $R<\covr(D)$. Consequently,
$\covr(D)\ge \inr(\Omega)$.
\end{proof}

\begin{rem} If $D$ is relatively dense in $X$ then  the infimum in
the definition of the covering radius is even a minimum i.e. $X =
\bigcup_{p\in D} B_R (p)$ for $R = \covr(D)$ holds. To see this
chose an arbitrary $x\in X$ and consider $B_{R+1} (x)\cap D$.   By
the definition of the covering radius this set  contains a sequence
$(p_n) \subset D$ with $\inf d(p_n,x)\leq  R$. Moreover, by (F) this
set is finite. Thus, it must contain a $p\in D$ with $d(p,x)\leq R$.
As $x\in X$ was arbitrary the desired statement follows.
\end{rem}

Here is our result on existence of a Voronoi decomposition.

\begin{prop}\label{prop-existencevoronoi}
Let $(X,b,m)$ be as above and assume that  $D\subset X$ is
non-empty. Then there exists a Voronoi decomposition with
centers from $D$. Moreover, whenever $R=\covr(D)$ is finite then any
Voronoi decomposition $(V_p)_{p\in D}$ of $X$ with centers from
$D$ has the property that  $V_p\subset B_R(p)$ for all  $p\in D$.
\end{prop}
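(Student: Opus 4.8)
The plan is to construct the Voronoi decomposition as a shortest-path forest rooted at $D$, and then to obtain the inclusion $V_p \subset B_R(p)$ as an easy consequence of the nearest-center property (V2). For the existence part, the first thing I would record is that for every $x \in X$ the distance $d(x,D) := \inf_{q \in D} d(q,x)$ is attained: since $D \neq \emptyset$ we have $d(x,D) < \infty$, and by finiteness of balls (Proposition \ref{prop-homogen}(c)) the set $B_{d(x,D)+1}(x) \cap D$ is finite and nonempty, so some $p \in D$ realizes the infimum. I would also note, using (G), that a geodesic from $x$ to a nearest center exists and has only finitely many edges (at most $b_{max}\,d(x,D)$ by Proposition \ref{prop-homogen}(a)); hence the \emph{combinatorial depth} $n(x)$, defined as the least number of edges occurring in a geodesic from $x$ to $D$, is a well-defined nonnegative integer with $n(x) = 0$ exactly when $x \in D$. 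Inducting on this integer-valued $n$ rather than on the metric distance is what keeps the argument well-founded, since the distance values need not lie on a discrete grid.

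The construction then proceeds by induction on $n(x)$. Fixing a total order on the countable set $X$, for each $x \notin D$ I would choose a parent $\rho(x)$ to be the smallest neighbour of $x$ lying on a geodesic from $x$ to $D$ that uses exactly $n(x)$ edges; such a neighbour exists and satisfies $n(\rho(x)) = n(x) - 1$, because deleting the first edge of a minimal-edge geodesic yields a minimal-edge geodesic from $\rho(x)$. Setting $\pi(p) = p$ for $p \in D$ and $\pi(x) = \pi(\rho(x))$ otherwise, the map $\pi$ is well-defined (the parent chain reaches $D$ in $n(x)$ steps), and I would define $V_p := \pi^{-1}(p)$. Disjointness and (V3) are then immediate, and $p \in V_p$. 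I expect the real work — and the main obstacle — to be the geodesic-convexity condition (V1), which asserts more than mere connectedness of the cells. The key observation that makes it tractable is that every interior vertex $y$ of a geodesic from a nearest center $p$ to $x$ is itself strictly closer to $D$ and still has $p$ as a nearest center: if some $q \in D$ had $d(q,y) < d(p,y)$, then $d(q,x) \le d(q,y) + d(y,x) < d(p,y) + d(y,x) = d(p,x) = d(x,D)$, a contradiction. Inducting on $n(x)$, I would prove simultaneously that $d(\pi(x),x) = d(x,D)$ (which is (V2)) and that the geodesic obtained by appending the edge $(\rho(x),x)$ to the in-cell geodesic from $\pi(x)$ to $\rho(x)$ lies entirely in $V_{\pi(x)}$ (which is (V1)); both follow from the inductive hypothesis together with the identity $d(x,\rho(x)) + d(\rho(x),D) = d(x,D)$.

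Finally, for the second assertion, let $(V_p)_{p \in D}$ be any Voronoi decomposition and let $x \in V_p$. Condition (V2) gives $d(p,x) \le d(q,x)$ for all $q \in D$, so $d(p,x) = d(x,D)$. Since $R = \covr(D)$ is finite, for every $R' > R$ we have $X = \bigcup_{q \in D} B_{R'}(q)$, whence $d(x,D) \le R'$; letting $R' \downarrow R$ gives $d(x,D) \le R$ and therefore $x \in B_R(p)$. This part is routine and uses only the definition of the covering radius (and optionally Lemma \ref{lem-covr-equal-inr}), so all the difficulty is concentrated in establishing (V1) for the constructed family.
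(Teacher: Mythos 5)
Your argument is correct, but it takes a genuinely different route from the paper. The paper proceeds non-constructively: it applies Zorn's lemma to families of pairwise disjoint sets satisfying (V1) and (V2) (starting from $V_p=\{p\}$), and shows that a maximal such family must already satisfy (V3) by taking a point $x$ outside $W:=\bigcup_p V_p$, a nearest center $p\in D$ and a geodesic from $p$ to $x$, locating the first vertex of that geodesic lying outside $W$, and checking via a chain of distance inequalities that this vertex can be adjoined to the cell of its predecessor, contradicting maximality. You instead build the decomposition explicitly as a shortest-path forest, assigning to each vertex a parent on an edge-minimal geodesic to $D$ and letting $V_p$ consist of the vertices whose parent chain terminates at $p$. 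Both arguments rest on exactly the same geometric input: finiteness of balls (so that nearest centers are attained) and (G), together with the observation that truncating a geodesic to $D$ again yields a geodesic to $D$ --- your displayed inequality $d(q,x)\le d(q,y)+d(y,x)<d(p,x)$ is precisely the paper's inequality \eqref{folg} in contrapositive form. Your decision to induct on the integer-valued combinatorial depth $n(x)$ rather than on the real-valued distance is the right move and is what makes the recursion well-founded; the only step you leave implicit is that the identity $d(x,\rho(x))+d(\rho(x),D)=d(x,D)$ requires noting $d(x,\rho(x))=1/b(x,\rho(x))$, which follows from the triangle inequality once the truncated path is known to be a geodesic to $D$. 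What your version buys: it avoids Zorn's lemma (a well-order of the countable set $X$ suffices to make the parent selection canonical) and exhibits more structure, namely that each cell is a tree of geodesics rooted at its center. What the paper's version buys: the maximality argument shows that \emph{any} admissible partial family can be extended to a Voronoi decomposition, not just the trivial one. Your proof of the inclusion $V_p\subset B_R(p)$ coincides with the paper's up to replacing the remark that the covering infimum is attained by the limit $R'\downarrow R$; both are fine since $B_R(p)$ is a closed ball.
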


\begin{proof} We first show existence of a Voronoi decomposition
with centers from $D$. A family $(V_p)_{p\in D}$  of pairwise
disjoint subsets of $X$ is called admissible, if it satisfies (V1)
and (V2) from Definition \ref{def-vor} above. Evidently, $V_p=\{
p\}$, $p\in D$,  gives such an admissible family. With the obvious
ordering we can apply Zorn's lemma and get a maximal admissible
family. We will show now that such a maximal family is a Voronoi
decomposition, i.e., satisfies as well (V3):
 $$
\bigcup_{p\in D}V_p=X.
$$
Assume otherwise. Then there exists an  $x\in X$ which does not
belong to
$$W:= \bigcup_{p\in D} V_p.$$
Now as $X$ is connected and $D$ is not empty, there exists an $R>0$
such that  the set
$$S:= B_R(x)\cap D$$ is not empty. Indeed, we may just take $R =
d(x,q)$ for any $q\in D$.  Moreover,  $S$ is finite as $B_R (x)$ is
finite (due to Proposition \ref{prop-homogen}). Therefore, there
exists a $p\in S$ with minimal distance to $x$ i.e. with
\begin{equation}\label{minimal}
d(p,x)\leq d(u,x)
\end{equation} for any $u\in S$. By  $p\in S$,
clearly, $d(p,x) \leq R$ holds. Thus, \eqref{minimal} holds also for
$u \in (X\setminus B_R (x))\cap D$. Hence, we see that
\eqref{minimal} holds for all $u\in D$.

Moreover, as our space is geodesic due to Proposition
\ref{prop-homogen}, there exists a  path $\gamma=(x_0,...,x_k)$ with
$x_0=p$ and $x_k=x$ and
$$d(p,x) = L(\gamma)  = \sum_{j=0}^{k-1} b(x_j, x_{j+1})^{-1}.$$
Then, for any $u\in D$ we must have
\begin{equation}\label{folg}
d(u,x_l) \geq \sum_{j=0}^{l-1} b(x_j, x_{j+1})^{-1}
\end{equation}
for any $l=1,...,k$, as otherwise we would arrive at
$$d(u,x) \leq d(u,x_{l}) + d(x_{l}, x) < \sum_{j=0}^{k-1} b(x_j,
x_{j+1})^{-1} = d(p,x)$$ which contradicts \eqref{minimal}. Consider now
the smallest index $l\in \{0,\ldots, k-1\}$ with $x_l \in W$ and
$x_{l+1}\notin W$. (Such an $l$ exists as $x_0 = p\in W$ and $x =
x_k\notin W$ by our assumption.) Let $q\in D$ be such that $x_l \in
V_q$. By (V2) (applied to $V_q$) we then have
$$d(q,x_l) \leq d(p,x_l) \leq \sum_{j=0}^{l-1} b(x_j, x_{j+1})^{-1}.$$
Combined with \eqref{folg} this gives
\begin{equation}\label{e-eins} d(q,x_l) = d(p,x_l) = \sum_{j=0}^{l-1} b (x_j,
x_{j+1})^{-1}.
\end{equation}
Putting this together we arrive at
\begin{eqnarray*}
d(q,x_{l+1}) &\leq & d(q,x_l) + d (x_l, x_{l+1})\\
& \leq & d(q, x_l) +  b(x_l, x_{l+1})^{-1}\\
\eqref{e-eins} &=&\sum_{j=0}^{l} b (x_j, x_{j+1})^{-1} \\
\eqref{folg} &\leq & d(u,x_{l+1})
\end{eqnarray*}
for any $u\in D$. This  chain of inequalities   gives not only
$$d(q,x_{l+1}) \leq d(u,x_{l+1})$$
for all $u\in D$ but also (if we set $u =q$)
$$ d(q, x_{l+1}) = d(q, x_l) + b(x_{l}, x_{l+1})^{-1}.$$
Thus, we could add $x_{l+1}$ to $V_q$ and obtain the admissible
decomposition $(\widetilde{V}_u)_{u\in D}$ with $\widetilde{V}_q :=
V_q \cup \{x_{l+1}\}$ and $\widetilde{V}_u = V_u$ for $q\neq u \in
D$. This is a contradiction to maximality. Thus, we infer that a
maximal admissible family satisfies (V1), (V2) and (V3).

\medskip

We now show the last statement. So, let $(V_p)$, $p\in D$, be a
Voronoi decomposition. Let $p\in D$ and $x\in V_p$ be arbitrary.
As
 $\covr(D) =R<\infty$ there must exist a $q\in D$ with $d(q,x)\leq R$.  By
 (V2), we then infer
 $$d(p,x) \leq d(q,x) \leq R.$$
 This shows  $V_p\subset B_R (p)$.
\end{proof}

\begin{rem} We note that the proof of the previous proposition does
not require (B) and (M) but only the weaker  Heine-Borel property
(F) (and the resulting existence of geodesics (G)). Thus, the
proposition will be true in even more general situations than the
standard setting of our paper. Note also that the existence
statement of the proposition does not need relative denseness of the
set $D$. Although the existence of a Voronoi decomposition might sound
rather natural, it does require some non-trivial geometric input. It does not
hold under the sole assumption of (G), as can be seen by the following simple example.
\end{rem}

Here comes a geodesic weighted graph that does not allow a Voronoi decomposition:

\begin{example}
Let $X:=\left(\NN\times \{ 0\}\right)\cup \{ (1,1)\}$ with weight $b((n,0);(n+1,0))=2$ for $n\in\NN$,
$b((n,0),(1,1))=(1+\frac{1}{n})^{-1}$ for $n\in\NN$ and $b(x,y)=0$ else.
Since none of the points from
$D:=\NN\times \{ 0\}$ is closest to the point $(1,1)$, there is no Voronoi decomposition of $X$ with
centers in $D$ in the above sense.
\end{example}

 The next example is a variant of the previous one with
the additional feature that  each vertex has at most three  adjacent
vertices. It consists of one ray with finite total diameter and one
additional leave emanating at each site. The edge weights are chosen
such that the distance of the leaves to the 'origin' of the ray
become shorter and shorter. Here, are the details:

\begin{example} Let $X =
(\NN\times\{1\}) \cup (\NN\times \{0\})$. Set
$$b ((n,1), (n+1,1)) := {2 \cdot 4^{n-1}}$$
and
$$b ((n,1),(n,0)) := {4^{n-1}}$$
and $b(x,y) =0$ else. The example is a tree and, hence, clearly
satisfies (G). Moreover,  a short computation shows that the
distance of $(n,0)$ to $(1,1)$ is given by $\frac{2}{3} +
\frac{1}{3} \frac{1}{4^{n-1}}$. So, none of the points from
$D:=\NN\times \{0\}$ is closest to $(1,1)$. Hence, there is no
Voronoi decomposition.
\end{example}

\begin{theorem}\label{Dir-low}
Let $(X,b,m)$ be as above, in particular connected and such that (B)
and (M) hold and assume that $D\subset X$ is relatively dense,
$\Omega:=X\setminus D$. Then,
  $$
  \lambda_\Omega\ge\frac{1}{\inr(\Omega)\cdot \vol [\inr(\Omega)]},$$
  \mbox{  where  }
  $$\vol[s]:= \sup_{x\in X}\vol(B_s (x)) .
  $$
\end{theorem}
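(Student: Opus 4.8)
The plan is to deduce the infinite-volume bound from the finite-volume estimate of Theorem~\ref{Cheeger-fin}(a) by localizing the energy form to the cells of a Voronoi decomposition. Write $R:=\inr(\Omega)$; since $D$ is relatively dense we have $R=\covr(D)<\infty$ by Lemma~\ref{lem-covr-equal-inr}, and $R>0$ because any single point of $\Omega$ is its own $r$-ball for small $r$ (the points are uniformly separated, Proposition~\ref{prop-homogen}(a)). By Proposition~\ref{prop-existencevoronoi} there is a Voronoi decomposition $(V_p)_{p\in D}$ with $V_p\subset B_R(p)$ for every $p\in D$; in particular $\vol(V_p)\le\vol(B_R(p))\le\vol[R]$. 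Since the family is pairwise disjoint, covers $X$ by (V3), and each $q\in D$ lies in its own cell $V_q$, every cell satisfies $V_p\cap D=\{p\}$. Thus for $f\in\dom(\en_\Omega)$ (so $f=0$ on $D$) we have $f(p)=0$, and the centers play the role of a Dirichlet boundary inside each finite cell. By the variational principle it suffices to prove $\en_\Omega(f,f)\ge (R\,\vol[R])^{-1}\|f\|^2$ for all such $f$.

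First I would establish a \emph{localized} pointwise bound. Fix $p$ and $x\in V_p$. By (V1) there is a path $\gamma=(x_0,\dots,x_k)$ from $x_0=p$ to $x_k=x$ lying entirely in $V_p$ with $L(\gamma)=d(p,x)\le R$ (the inequality from $V_p\subset B_R(p)$). Running the Cauchy--Schwarz computation of Theorem~\ref{Cheeger-fin}(a) verbatim, and using $f(p)=0$, gives
\[
|f(x)|^2=|f(x)-f(p)|^2\le L(\gamma)\sum_{j=0}^{k-1}b(x_j,x_{j+1})|f(x_{j+1})-f(x_j)|^2\le R\,\en_p(f,f),
\]
where $\en_p(f,f):=\tfrac12\sum_{x,y\in V_p}b(x,y)|f(x)-f(y)|^2$ is the energy carried by the edges internal to $V_p$; the key point is that, because $\gamma$ stays in $V_p$ and a geodesic uses each edge at most once, the path sum is a subsum of $\en_p(f,f)$. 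Summing against $m$ over $V_p$ yields
\[
\sum_{x\in V_p}|f(x)|^2 m(x)\le R\,\en_p(f,f)\,\vol(V_p)\le R\,\vol[R]\,\en_p(f,f).
\]

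Then I would sum over $p\in D$. Because the cells partition $X$, $\|f\|^2=\sum_{p\in D}\sum_{x\in V_p}|f(x)|^2m(x)$, so the previous display gives $\|f\|^2\le R\,\vol[R]\sum_{p\in D}\en_p(f,f)$. For distinct $p,q$ the edge sets internal to $V_p$ and to $V_q$ are disjoint (a shared edge would force $V_p\cap V_q\neq\emptyset$), and all of them are edges of $X$; hence $\sum_{p\in D}\en_p(f,f)\le\en(f,f)=\en_\Omega(f,f)$, the last equality by the definition of the restricted form. Combining, $\|f\|^2\le R\,\vol[R]\,\en_\Omega(f,f)$, which is exactly the claimed lower bound $\lambda_\Omega\ge (\inr(\Omega)\,\vol[\inr(\Omega)])^{-1}$.

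The hard part is not the final estimate but the geometric input it rests on: I need, for every $x$, a geodesic issuing from a point of $D$ (where $f$ vanishes) that \emph{remains inside a single cell} and has length at most $\inr(\Omega)$. This is precisely what property (V1) together with the containment $V_p\subset B_{\covr(D)}(p)$ and the identity $\covr(D)=\inr(\Omega)$ provide. If geodesics were allowed to leave their cell, the local energies $\en_p$ could share edges and the disjointness step $\sum_p\en_p\le\en$ would fail, so the existence of a genuine Voronoi decomposition (Proposition~\ref{prop-existencevoronoi})---rather than a mere nearest-center partition---is doing the essential work. The trivial cases ($\Omega=\emptyset$, or $\inr(\Omega)$ replaced by any larger value, which only weakens the bound) can be disposed of separately.
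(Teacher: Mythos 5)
Your proof is correct and follows essentially the same route as the paper: a Voronoi decomposition with centers in $D$, the containment $V_p\subset B_{\covr(D)}(p)$ together with (V1), and a per-cell Poincar\'e-type estimate summed over the disjoint cells. The only difference is presentational --- you inline the Cauchy--Schwarz telescoping argument of Theorem~\ref{Cheeger-fin}(a) inside each cell rather than citing that theorem for the induced subgraph with Dirichlet set $\{p\}$, which is a legitimate (and arguably slightly more self-contained) way to carry out the identical idea.
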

\begin{rem} (a)  Note that $\vol[s] <\infty$ for any $s\geq 0$  due to
Proposition \ref{prop-homogen}.

(b) The proof  shows that we can actually replace $\vol [s]$ by the
slightly better $\vol_\Omega [s] :=\sup_{x\in X}\vol(B_s (x)\cap
\Omega)$.
\end{rem}

\begin{proof}
Let  $V_p$, $p\in D$, be the Voronoi decomposition from the
preceding proposition. Then, for $f\in\dom(\en_\Omega)$ (i.e., $f=0$ on $D=X\setminus \Omega$), we have
$$\mathcal{E}(f,f) = \frac{1}{2}\sum_{x,y} b(x,y) (f(x) - f(y))^2 \geq \frac{1}{2}\sum_{p\in D} \sum_{x,y\in V_p} b(x,y)(f(x)- f(y))^2.$$
For each $p\in D$ we have $f\in\dom(\en_{V_p \setminus \{p\}})$ and thus
$$ \frac{1}{2} \sum_{x,y\in V_p} b(x,y)(f(x)- f(y))^2 \geq \lambda_{V_p\setminus \{p\}} \|f 1_{V_p\setminus \{p\}}\|^2 = \lambda_{V_p\setminus \{p\}} \|f 1_{V_p}\|^2.$$
To this we apply Theorem~\ref{Cheeger-fin} with $\Omega= V_p \setminus \{p\}$ to get, after reinserting the sum over $p\in D$,
$$\mathcal{E}(f,f) \geq \sum_{p\in D} \frac{1}{\inr (V_p \setminus\{p\})
\vol (V_p \setminus \{p\})} \|f 1_{V_p}\|^2 .$$  By
the previous proposition we know $V_p \subset B_R (p)$ with $R =
\covr (D)$. Moreover, by (V1) any $x\in V_p$ is connected to $p$  by
a path in $V_p$ of length $d(p,x)\leq R$. This gives $\inr(V_p
\setminus \{p\})\leq R$.  As $R= \covr(D) =\inr (\Omega)$ (see Lemma
\ref{lem-covr-equal-inr}) the desired statement follows easily.
\end{proof}

While in the context of our subject matter here the framework of bounded Laplacians and more specifically
the assumptions (B) and (M) are natural requirements, the study of lower bounds of Laplacians can be
generalized to more general weighted graphs. We refer to the upcoming papers \cite{lowerbounds} and \cite{poincare}
for more details.
\section{The large coupling limit}\label{Schur}

In this section we study the large coupling limit
$$
H+ t 1_D \rightarrow H_{\Omega}
$$
more thoroughly. Recall that in the situation we have in mind, $H$
is a weighted Laplacian on some weighted $\ell^2(X,m)$ and $D
\subset X$ is a subset, $\Omega = X \setminus D$. We already noticed
that the above convergence takes place in the strong resolvent
sense, as can easily be seen from Kato's monotone convergence
theorem, see \cite{Kato}, Thm 3.13a, p.461 for the densely defined
and \cite{Simon-78}, Thm. 4.1, p. 383 for the general case. Under
the assumption that $H$ is bounded, we actually see that we even
have norm resolvent convergence with optimal decay rate $1/t$ and
uniform bounds that depend on $\|H\|$ only, see Proposition
\ref{Schurprop} below.

The analogous problem is much more intricate in the continuum.
There, with $H=-\Delta$, one needs certain regularity assumptions on
$\Omega$ to even get strong resolvent convergence and these
regularity assumptions will not suffice to decide norm resolvent
convergence. In this context we refer to  \cite{BAB,BD} for results
on norm convergence in a rather general framework and further
references.

In our case, we don't even need to take into account the special
structure, so in what follows, let $\cal{H}$ be a Hilbert space, $P:
{\cal H} \to {\cal H}_1$ the orthogonal projection onto a closed
subspace $\{0\} \not= {\cal H}_1 \subsetneq {\cal H}$ (to avoid
trivialities), $Q$ the orthogonal projection onto ${\cal
H}_1^{\perp} =: {\cal H}_2$, $0\le H$ a bounded selfadjoint operator
on ${\cal H}$ and
$$
H_t := H + t Q^*Q.
$$
Again, monotone convergence implies that the corresponding forms
$\en_t$ converge, as $t\to \infty$, to the closed form
$\en_{\infty}$ given by
$$
\dom(\en_{\infty}) = \{ f\in {\cal H} \mid \sup_t \,\langle H_t f, f \rangle < \infty\} = {\cal H}_1 \
$$
$$
\en_{\infty}(f,g)  =  \langle Hf, g \rangle
$$

\begin{rem}
The unique selfadjoint operator in ${\cal H}_1$ associated with
$\en_{\infty}$ is given by $PHP^*$.
\end{rem}

\begin{prop} \label{Schurprop}
For $t\geq 2 \|H + 1\|^2$ we have
$$
\|(H_t+1)^{-1} - P^* (PHP^*+1)^{-1}P\| \le \frac{4\|H+1\|^2}{1+t}.
$$
\end{prop}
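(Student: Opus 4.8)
The plan is to use the orthogonal splitting $\mathcal{H}=\mathcal{H}_1\oplus\mathcal{H}_2$ and to write every operator as a $2\times2$ block matrix; the name of the section already signals that the Schur complement (block inversion) formula is the tool of choice. Set $A:=PHP^*$, $D:=QHQ^*$ and $B:=PHQ^*$. Since $H\ge0$ we have $A\ge0$, $D\ge0$, $\|B\|\le\|H\|$, and $\|H+1\|=1+\|H\|$, so in particular $\|H\|\le\|H+1\|$. With respect to this splitting,
$$
H_t+1=\begin{pmatrix}A+1 & B\\ B^* & D+1+t\end{pmatrix},\qquad P^*(PHP^*+1)^{-1}P=\begin{pmatrix}(A+1)^{-1} & 0\\ 0 & 0\end{pmatrix},
$$
so the entire task reduces to inverting the left-hand block matrix and comparing its four blocks against this diagonal one.

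Because $D\ge0$, the lower-right block is invertible with $\|(D+1+t)^{-1}\|\le(1+t)^{-1}$, which allows me to form the Schur complement $S_t:=(A+1)-B(D+1+t)^{-1}B^*$ and to write $(H_t+1)^{-1}$ explicitly via the standard block-inversion formula. The crucial---and essentially only nontrivial---step is to control $S_t^{-1}$. Here $A+1\ge1$, while $0\le B(D+1+t)^{-1}B^*\le\|H\|^2(1+t)^{-1}$ as an operator, so the hypothesis $t\ge2\|H+1\|^2$ forces this correction to have norm below $1/2$; hence $S_t\ge1/2$ and $\|S_t^{-1}\|\le2$. This is precisely the point at which the coupling condition on $t$ is consumed.

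Armed with $\|S_t^{-1}\|\le2$, $\|(A+1)^{-1}\|\le1$, $\|(D+1+t)^{-1}\|\le(1+t)^{-1}$ and $\|B\|\le\|H\|$, each block of the difference is of order $1/t$. For the upper-left block I use the resolvent identity in the form $S_t^{-1}-(A+1)^{-1}=S_t^{-1}B(D+1+t)^{-1}B^*(A+1)^{-1}$; the off-diagonal and lower-right blocks are read off directly from the block-inversion formula. I then bound the norm of the assembled (self-adjoint) block matrix by the maximum of the two diagonal-block norms plus the off-diagonal-block norm, and collect constants using $\|H\|\le\|H+1\|$ and $c+1\le2c$ for $c:=\|H+1\|\ge1$; this lands exactly on $4\|H+1\|^2/(1+t)$. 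The main obstacle is not any single estimate but the bookkeeping: keeping the Schur complement uniformly bounded below (the role of the hypothesis on $t$) and tracking the numerical constants so that the four block contributions combine into the clean factor $4$.
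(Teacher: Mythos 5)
Your proposal is correct and follows essentially the same route as the paper: the same block decomposition of $H_t+1$, the same Schur complement $S_t$ with the bound $\|S_t^{-1}\|\le 2$ coming from $t\ge 2\|H+1\|^2$, the same block-inversion formula, and the same resolvent-identity treatment of the upper-left corner. The only cosmetic differences are that you assemble the final norm bound as ``max of diagonal blocks plus off-diagonal block'' rather than the paper's $2\max$ over all blocks, and you carry $\|H\|$ instead of $\|H+1\|$ for the off-diagonal entry; both yield the stated constant.
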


Note that $P^*(PHP^*+1)^{-1} P = (PHP^*+1)^{-1} \oplus 0$, the resolvent of $PHP^*$ which is defined on ${\cal H}_1$, extended by $0$ to ${\cal H}_1^{\perp} = {\cal H}_2$.

\begin{proof}[Proof of the Proposition]
We use the Schur complement by decomposing $H_t$ according to ${\cal H}_1 \oplus {\cal H}_2$ into a block operator matrix
$$
H_t  = \left( \begin{array}{cc} P H_t P^* & P H_t Q^* \\ Q H_t P^* & Q H_t Q^* \end{array} \right).
$$
Since $PQ^* = QP^* =0$,
$$
H_t + 1 = \left( \begin{array}{cc} P(H+1)P^*   & P(H+1)Q^* \\
Q(H+1)P^* & Q(H+t+1)Q^* \end{array} \right) =: \left(
\begin{array}{cc} A & B
\\ B^* & D_t \end{array} \right).
$$

For $t\ge 0$, $D_t$ is invertible with $\|D_t^{-1}\| \le (1+t)^{-1}$. Consequently, the Schur complement
$$
S_t := P(H+1)P^* - BD_t^{-1} B^*
$$
is boundedly invertible for large enough $t$, since
$$
P(H+1)P^* \ge 1 \quad \mbox{and} \quad \|BD_t^{-1} B^*\| \le \|H+1\|^2 \frac{1}{1+t}.
$$
More precisely, we get
$$
S_t \ge \frac12\mbox{  and }\| S_t^{-1}\|\le 2\mbox{  for  }t\ge 2 \|H+1\|^2 .
$$
Using the Schur complement to invert $H_t+1$ gives
$$
(H_t+1)^{-1} = \left( \begin{array}{cc} S_t^{-1} & -S_t^{-1} B D_t^{-1} \\ - D_t^{-1} B^* S_t^{-1} & D_t^{-1} (1+B^* S_t^{-1} B)D_t^{-1} \end{array} \right),
$$
Therefore,
\begin{eqnarray*}
\lefteqn{(H_t+1)^{-1} -P^* (PHP^*+1)^{-1}P} \\ & = & \left( \begin{array}{cc} S_t^{-1}- (P(H+1)P^*)^{-1}& -S_t^{-1} B D_t^{-1} \\ - D_t^{-1} B^* S_t^{-1} & D_t^{-1} (1+B^* S_t^{-1} B)D_t^{-1} \end{array} \right)
\end{eqnarray*}
can be bounded in norm by
$$
2 \max\{\| S_t^{-1}- (P(H+1)P^*)^{-1}\|,\| D_t^{-1} B^* S_t^{-1}\|,
\| D_t^{-1} (1+B^* S_t^{-1} B)D_t^{-1}\|\} .
$$
Using the resolvent equation for the first term as well as the above
bounds gives the claim.
\end{proof}

\begin{lemma}
Let $H_2 \ge 0$ be a selfadjoint operator on  ${\cal H}$, $H_1\ge 0$
a selfadjoint operator (possibly on a subspace  ${\cal H}_1$),
$\lambda_i := \min \sigma(H_i)$ for $i=1,2$ and assume that
$\lambda_1\ge\lambda_2$. Then
\begin{eqnarray*}
0\le \lambda_1- \lambda_2 &\le& (\lambda_1+1)^2 \|(H_1+1)^{-1} - (H_2+1)^{-1}\| \\
&\le&
 \| H_1+1\|^2 \|(H_1+1)^{-1} - (H_2+1)^{-1}\| ,
\end{eqnarray*}
the latter provided $H_1$ is bounded.
\end{lemma}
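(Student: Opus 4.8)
The plan is to reduce the whole statement to one scalar observation about the norms of the two resolvents. First I would note that since $H_i\ge 0$, the operator $H_i+1$ is selfadjoint with $\sigma(H_i+1)\subset[\lambda_i+1,\infty)$, so $(H_i+1)^{-1}$ is a bounded positive operator. Applying the functional calculus to the decreasing function $x\mapsto(x+1)^{-1}$ on $\sigma(H_i)$, the spectral radius of $(H_i+1)^{-1}$ (which equals its norm, by selfadjointness and positivity) is attained at the bottom of the spectrum. Hence $\|(H_i+1)^{-1}\|=(\lambda_i+1)^{-1}$ for $i=1,2$. This is the only genuinely operator-theoretic input, and it is completely routine; it is also insensitive to the fact that $H_1$ may live on a subspace $\mathcal{H}_1$, since the norm identity holds for $H_1$ as an operator on $\mathcal{H}_1$ (and extending its resolvent by $0$ to $\mathcal{H}$ does not change the norm).

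Second, I would invoke the reverse triangle inequality $\bigl|\,\|A\|-\|B\|\,\bigr|\le\|A-B\|$ with $A=(H_2+1)^{-1}$ and $B=(H_1+1)^{-1}$. Using the norm identity from the first step together with the hypothesis $\lambda_1\ge\lambda_2$ (which gives $(\lambda_2+1)^{-1}\ge(\lambda_1+1)^{-1}$), this yields $(\lambda_2+1)^{-1}-(\lambda_1+1)^{-1}\le\|(H_1+1)^{-1}-(H_2+1)^{-1}\|$. The left-hand side equals $(\lambda_1-\lambda_2)/[(\lambda_1+1)(\lambda_2+1)]$ by elementary algebra, so rearranging gives $\lambda_1-\lambda_2\le(\lambda_1+1)(\lambda_2+1)\,\|(H_1+1)^{-1}-(H_2+1)^{-1}\|$. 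Since $\lambda_2\le\lambda_1$ forces $\lambda_2+1\le\lambda_1+1$, I may replace the product $(\lambda_1+1)(\lambda_2+1)$ by $(\lambda_1+1)^2$, which is exactly the first asserted inequality; the nonnegativity $0\le\lambda_1-\lambda_2$ is just the hypothesis.

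Finally, for the last inequality I would use that a nonnegative bounded selfadjoint operator satisfies $\lambda_1=\min\sigma(H_1)\le\max\sigma(H_1)=\|H_1\|$, whence $\lambda_1+1\le\|H_1\|+1=\|H_1+1\|$ (the last equality again because $\sigma(H_1+1)\subset[1,\|H_1\|+1]$). Squaring gives $(\lambda_1+1)^2\le\|H_1+1\|^2$ and completes the chain.

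I do not foresee a real obstacle: the argument is a short functional-calculus computation combined with the reverse triangle inequality. The only point requiring a little care is keeping track of which of $\lambda_1,\lambda_2$ is larger, so that the reverse triangle inequality is applied in the direction producing a nonnegative left-hand side; the hypothesis $\lambda_1\ge\lambda_2$ is used precisely for this, and also to pass from $(\lambda_1+1)(\lambda_2+1)$ to $(\lambda_1+1)^2$.
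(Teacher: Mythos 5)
Your argument is correct, including the reduction of the subspace case to the observation that extending $(H_1+1)^{-1}$ by $0$ does not change its norm. It is, however, organized differently from the paper's proof. You compute the \emph{exact} resolvent norms $\|(H_i+1)^{-1}\|=(\lambda_i+1)^{-1}$ via the spectral mapping theorem and then apply the reverse triangle inequality, obtaining $\lambda_1-\lambda_2\le(\lambda_1+1)(\lambda_2+1)\,\delta$ with $\delta:=\|(H_1+1)^{-1}-(H_2+1)^{-1}\|$. The paper instead works with operator inequalities: from $(H_2+1)^{-1}\le(H_1+1)^{-1}+\delta\le(\lambda_1+1)^{-1}+\delta$ it inverts (using antitonicity of inversion on positive operators) to get $H_2+1\ge(\lambda_1+1)/(1+\delta(\lambda_1+1))$, hence $\lambda_2\ge\lambda_1-\delta(\lambda_1+1)^2/(1+\delta(\lambda_1+1))$. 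A short computation shows these two intermediate bounds are algebraically equivalent, so neither route is sharper; yours trades the operator-monotonicity step for the scalar reverse triangle inequality, which is arguably the more elementary input, at the cost of needing the norm of a positive selfadjoint operator to be \emph{attained} at the top of its spectrum (which holds here since the spectrum is closed). Both proofs finish identically, via $\lambda_1+1\le\|H_1+1\|$ when $H_1$ is bounded.
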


\begin{proof}
Denote $\delta := \|(H_1+1)^{-1} - (H_2+1)^{-1}\|$. Since $H_i \ge
\lambda_i$ we get $(H_i+1)^{-1} \le (\lambda_i+1)^{-1}$ which gives
$(H_2+1)^{-1} \le (\lambda_1+1)^{-1} + \delta$. Thus
$$
H_2 +1 \ge \frac{1}{\frac{1}{\lambda_1+1}+ \delta} =
\frac{\lambda_1+1}{1+\delta(\lambda_1+1)}
$$
and
$$
\lambda_2 \ge
\frac{\lambda_1+1-1-\delta(\lambda_1+1)}{1+\delta(\lambda_1+1)} =
\lambda_1 - \delta \left( \frac{(\lambda_1+1)^2
}{1+\delta(\lambda_1+1)} \right) \ge \lambda_1 - (\lambda_1+1)^2
\delta,
$$
as claimed. In case that  $H_1$ is bounded, the spectrum is bounded
by the norm. This argument extends to the case where $H_1$ is defined on a subspace ${\cal H}_1$, with $(H_1+1)^{-1}$ is to be read as $(H_1+1)^{-1} \oplus 0$ on ${\cal H} = {\cal H}_1 \oplus {\cal H}_1^{\perp}$.
\end{proof}

Given the previous two results we immediately infer the following.

\begin{corollary}\label{cor-schur}
In the situation of Proposition~\ref{Schurprop}, let
$$
\lambda_t:= \min \sigma(H_t)\mbox{  and  }\lambda_\infty:= \min \sigma(PHP^*) .
$$
Then, we have $\lambda_\infty \geq \lambda_t$ for all $t\geq 0$ and
 for $t\ge 2 \|H + 1\|^2$,
\begin{eqnarray*}
 \lambda_t &\ge& \lambda_\infty - \frac{4\|H+1\|^2
(\lambda_\infty+1)^2}{t+1}\geq  \lambda_\infty -
\frac{4\|H+1\|^4}{t+1} .
 \end{eqnarray*}
\end{corollary}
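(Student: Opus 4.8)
The plan is to read this Corollary as a direct synthesis of the two preceding results: Proposition~\ref{Schurprop} supplies a quantitative norm-resolvent bound, and the preceding Lemma converts such a bound into a gap estimate for the bottoms of the spectra. Concretely, I would apply the Lemma with $H_1 := PHP^*$ (acting on $\mathcal{H}_1$) and $H_2 := H_t$ (acting on $\mathcal{H}$), so that $\lambda_1 = \lambda_\infty$ and $\lambda_2 = \lambda_t$. Both operators are nonnegative: $H \ge 0$ gives $H_t = H + tQ^*Q \ge 0$, and $\langle PHP^* f, f\rangle = \langle H P^* f, P^* f\rangle \ge 0$. The convention in the Lemma that $(H_1+1)^{-1}$ means $(H_1+1)^{-1}\oplus 0$ on $\mathcal{H} = \mathcal{H}_1 \oplus \mathcal{H}_1^{\perp}$ matches exactly the operator $P^*(PHP^*+1)^{-1}P$ appearing in Proposition~\ref{Schurprop}, so the two results dovetail without further bookkeeping.

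Before invoking the Lemma I must verify its hypothesis $\lambda_1 \ge \lambda_2$, i.e. the asserted ordering $\lambda_\infty \ge \lambda_t$ for all $t \ge 0$. I would obtain this from the variational characterisation of the bottom of the spectrum. For $f \in \mathcal{H}_1$ one has $Qf = 0$, whence $\langle H_t f, f\rangle = \langle Hf, f\rangle = \langle PHP^* f, f\rangle$; thus $\en_\infty$ and $\en_t$ agree on the subspace $\mathcal{H}_1 = \dom(\en_\infty) \subset \mathcal{H} = \dom(\en_t)$. Taking the Rayleigh infimum over the smaller set $\mathcal{H}_1$ can only make it larger, which yields
$$
\lambda_\infty = \inf_{0 \neq f \in \mathcal{H}_1} \frac{\langle H_t f, f\rangle}{\|f\|^2} \ge \inf_{0 \neq f \in \mathcal{H}} \frac{\langle H_t f, f\rangle}{\|f\|^2} = \lambda_t .
$$

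With the ordering in hand, the Lemma gives $0 \le \lambda_\infty - \lambda_t \le (\lambda_\infty+1)^2 \,\delta$ with $\delta := \|P^*(PHP^*+1)^{-1}P - (H_t+1)^{-1}\|$, and Proposition~\ref{Schurprop} bounds $\delta \le 4\|H+1\|^2/(1+t)$ for $t \ge 2\|H+1\|^2$. Substituting and rearranging to isolate $\lambda_t$ produces the first displayed inequality. For the second, cruder inequality I would use that $PHP^*$ is bounded with $\lambda_\infty \le \|PHP^*\| \le \|H\|$, so that $(\lambda_\infty+1)^2 \le (\|H\|+1)^2 = \|H+1\|^2$ (the last equality because $H \ge 0$), turning the factor $4\|H+1\|^2(\lambda_\infty+1)^2$ into $4\|H+1\|^4$. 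The computation is otherwise routine; the only points genuinely requiring care are the variational proof of $\lambda_\infty \ge \lambda_t$ and the matching of the subspace/extension conventions between the Lemma and the Proposition, so that the resolvent difference estimated in Proposition~\ref{Schurprop} is literally the quantity $\delta$ the Lemma asks for.
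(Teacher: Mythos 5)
Your proposal is correct and follows exactly the route the paper intends: the Corollary is stated as an immediate consequence of Proposition~\ref{Schurprop} and the preceding Lemma, applied with $H_1 = PHP^*$ and $H_2 = H_t$, and your variational argument for $\lambda_\infty \ge \lambda_t$ together with the estimate $(\lambda_\infty+1)^2 \le \|H+1\|^2$ supplies precisely the details the paper leaves implicit. Nothing to add.
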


\section{A quantitative unique continuation result for the Laplacian on graphs}\label{unique}

We are now in position to derive our main result by combining what
we have established so far. We let $(X,b,m)$ be as above, in
particular connected and such that (B) and (M) hold and assume that
$D\subset X$ is relatively dense, $\Omega:=X\setminus D$.

We recall that by \cite{BLS-10}, Thm 1.1,

$$
 P_I(H) 1_D P_I(H)\ge \kappa P_I(H)
$$
provided there is $t>0$ such that
\begin{equation}
 \max I< \min\sigma (H+t1_D)=:\lambda_t
\end{equation}
Actually, in this case
$$
\kappa\ge \frac{\lambda_t-\max I}{t} .
$$
Moreover, in virtue of Theorem \ref{Dir-low}
 we have a lower bound on $\lambda_\Omega = \min \sigma
(H_\Omega)$ viz
$$
\lambda_\Omega \ge \frac{1}{\inr(\Omega)\cdot \vol [\inr(\Omega)]}
$$
 and, by Corollary \ref{cor-schur}
applied with $\lambda_\infty = \lambda_\Omega$,  we have  a lower
bound
\begin{equation}\label{lowbd}
\lambda_t \ge \lambda_\Omega -
4\|H+1\|^2 (\lambda_\Omega+1)^2\frac{1}{t+1}
\end{equation}
for $t\ge 2 \|H+1\|^2$.  From
these bounds we easily get
\begin{theorem}
 \label{main} Let $(X,b,m)$ be as above, $D\subset X$ relatively dense and $\Omega:=X\setminus D\not=\emptyset$. Let $I\subset\mathbb{R}$ such that $\max I <\lambda_\Omega$.
 Then
 \begin{equation}\label{mainineq}
  P_I(H) 1_DP_I(H) \ge \frac{(\lambda_\Omega-\max I)^2 }{16\| H+1\|^2(\lambda_\Omega+1)^2} P_I (H).
 \end{equation}
\end{theorem}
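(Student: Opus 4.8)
The plan is to combine the three ingredients assembled just before the statement: the spectral uncertainty principle from \cite{BLS-10}, the lower bound on $\lambda_\Omega$ from Theorem \ref{Dir-low}, and the convergence estimate \eqref{lowbd} from Corollary \ref{cor-schur}. The abstract principle says that $P_I(H)1_DP_I(H)\ge \kappa P_I(H)$ with $\kappa\ge (\lambda_t-\max I)/t$, provided we can find some $t>0$ for which $\max I<\lambda_t=\min\sigma(H+t1_D)$. So the whole proof reduces to choosing a good value of $t$ and bookkeeping the resulting constant.

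First I would set $\mu:=\lambda_\Omega-\max I>0$, which is positive by the hypothesis $\max I<\lambda_\Omega$. The estimate \eqref{lowbd} tells us that $\lambda_t\ge\lambda_\Omega-\tfrac{C}{t+1}$ with $C:=4\|H+1\|^2(\lambda_\Omega+1)^2$, valid for $t\ge 2\|H+1\|^2$. Hence $\lambda_t-\max I\ge \mu-\tfrac{C}{t+1}$. The natural move is to pick $t$ large enough that the tail $\tfrac{C}{t+1}$ has eaten only half of the gap $\mu$, i.e. to require $\tfrac{C}{t+1}\le \tfrac{\mu}{2}$, equivalently $t+1\ge \tfrac{2C}{\mu}$. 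With such a $t$ one gets $\lambda_t-\max I\ge \tfrac{\mu}{2}$, so the principle yields $\kappa\ge \tfrac{\mu}{2t}$. To make $\kappa$ as large as possible I want $t$ as small as possible, so I would take $t+1$ essentially equal to $\tfrac{2C}{\mu}$; then $t\le \tfrac{2C}{\mu}$ and $\kappa\ge \tfrac{\mu}{2t}\ge \tfrac{\mu}{2}\cdot\tfrac{\mu}{2C}=\tfrac{\mu^2}{4C}=\tfrac{(\lambda_\Omega-\max I)^2}{16\|H+1\|^2(\lambda_\Omega+1)^2}$, which is exactly \eqref{mainineq}.

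The one point that needs a little care is the admissibility constraint $t\ge 2\|H+1\|^2$ under which \eqref{lowbd} holds: the chosen $t\approx \tfrac{2C}{\mu}$ must actually satisfy it. Writing out $C$, we have $\tfrac{2C}{\mu}=\tfrac{8\|H+1\|^2(\lambda_\Omega+1)^2}{\lambda_\Omega-\max I}$, and since $\lambda_\Omega+1\ge 1$ while $\lambda_\Omega-\max I$ is bounded above by something comparable to $\|H\|+1$ (as $\lambda_\Omega\le\|H\|$), the factor $\tfrac{(\lambda_\Omega+1)^2}{\lambda_\Omega-\max I}$ is at least a constant of order one, so this $t$ comfortably exceeds $2\|H+1\|^2$. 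I expect this to be the only genuine obstacle, and it is mild; the rest is the elementary optimization over $t$ described above, together with invoking $\lambda_\infty=\lambda_\Omega$ when applying Corollary \ref{cor-schur} (with $H_\Omega=PHP^*$ for $P$ the projection onto $\ell^2(\Omega,m)$ and $Q=1_D$). Finally, having the lower bound $\lambda_\Omega\ge \tfrac{1}{\inr(\Omega)\vol[\inr(\Omega)]}$ from Theorem \ref{Dir-low} is what makes the whole statement geometrically meaningful, but it is not logically needed for \eqref{mainineq} itself, which is phrased directly in terms of $\lambda_\Omega$.
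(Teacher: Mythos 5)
Your proposal is correct and follows essentially the same route as the paper: invoke the abstract bound $\kappa\ge(\lambda_t-\max I)/t$ from \cite{BLS-10}, insert the estimate \eqref{lowbd} from Corollary \ref{cor-schur}, and choose $t$ to balance the two terms --- your ``give up half the gap'' choice $t+1=2C/\mu$ is exactly the paper's optimizer $t_{max}=2c_1/c_0$ and yields the identical constant. Your verification that this $t$ satisfies the admissibility constraint $t\ge 2\|H+1\|^2$ is, if anything, spelled out more explicitly than in the paper.
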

\begin{proof}
Using (\ref{lowbd}) we find
 $$
 P_I(H) 1_DP_I(H) \ge \kappa P_I(H)
$$
for a $\kappa$ satisfying
$$
\kappa \ge  \sup_{t\ge 2\|H+1\|^2 }\frac{\lambda_\Omega -\max I
-4\|H+1\|^2 (\lambda_\Omega+1)^2\frac{1}{t+1}}{t}.
$$
To make things easier we replace $\frac{1}{t+1}$ by $\frac{1}{t}$ which
gives a lower bound; hence, we are left to find the maximum of
$$
f(t):= \frac{c_0}{t} - \frac{c_1}{t^2}  \mbox{  for  }t\ge 2\|H+1\|^2
$$
for an appropriate choice of $c_0$ and $c_1$. The corresponding argument is
$$
t_{max}=2\frac{c_1}{c_0}=8\frac{\|H+1\|^2 (\lambda_\Omega+1)^2}{\lambda_\Omega
-\max I}\ge 2\|H+1\|^2
$$
with maximal value
$$
\frac{(\lambda_\Omega-\max I)^2 }{16 \|H+1\|^2 (\lambda_\Omega+1)^2}
,
$$
the assertion.
\end{proof}

Given the preceding theorem we can now  use the lower bound from
Theorem \ref{Dir-low} and the trivial bound  $\lambda_\Omega\le\|
H_\Omega\|\le \| H\|$ to obtain the following corollary.

\begin{corollary}\label{coro-main}
Let $(X,b,m)$ be as above, $D\subset X$ relatively dense and
$\Omega:=X\setminus D\not=\emptyset$. Let $I\subset\mathbb{R}$ such
that $\max I < \frac{1}{\inr(\Omega)\cdot \vol [\inr(\Omega)]}$.
Then
 \begin{equation}\label{corineq}
 P_I(H) 1_D P_I(H) \ge \frac{(\frac{1}{\inr(\Omega)\cdot \vol
[\inr(\Omega)]}-\max I)^2 }{16\| H+1\|^4} P_I (H) .
 \end{equation}
\end{corollary}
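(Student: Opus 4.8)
The plan is to deduce this directly from Theorem~\ref{main}, feeding in the geometric lower bound of Theorem~\ref{Dir-low} together with the elementary norm bound $\lambda_\Omega \le \|H\|$. To lighten notation I write $\mu := \frac{1}{\inr(\Omega)\cdot\vol[\inr(\Omega)]}$, so that the hypothesis reads $\max I < \mu$ and the claimed inequality \eqref{corineq} becomes $P_I(H)1_DP_I(H) \ge \frac{(\mu-\max I)^2}{16\|H+1\|^4}P_I(H)$. By Theorem~\ref{Dir-low} we have $\lambda_\Omega \ge \mu$, hence $\max I < \mu \le \lambda_\Omega$ and Theorem~\ref{main} applies, yielding \eqref{mainineq}. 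Thus it suffices to show that the scalar coefficient in \eqref{mainineq} dominates the one in \eqref{corineq}; the operator inequality then follows because $P_I(H)\ge 0$, so multiplying a valid scalar inequality by $P_I(H)$ preserves it.

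First I would handle the numerator. Since $\lambda_\Omega \ge \mu$ and $\mu - \max I > 0$ by hypothesis, we get $\lambda_\Omega - \max I \ge \mu - \max I > 0$, and therefore $(\lambda_\Omega-\max I)^2 \ge (\mu-\max I)^2$.

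For the denominator I would use the trivial bound $\lambda_\Omega \le \|H_\Omega\| \le \|H\|$. Because $H \ge 0$ is bounded, $\sigma(H)\subset[0,\|H\|]$, whence $\|H+1\| = \|H\|+1$ and consequently $\lambda_\Omega + 1 \le \|H\|+1 = \|H+1\|$. Squaring and multiplying by $16\|H+1\|^2$ gives $16\|H+1\|^2(\lambda_\Omega+1)^2 \le 16\|H+1\|^4$. Combining the numerator and denominator estimates yields $\frac{(\lambda_\Omega-\max I)^2}{16\|H+1\|^2(\lambda_\Omega+1)^2} \ge \frac{(\mu-\max I)^2}{16\|H+1\|^4}$, which is precisely the desired comparison of coefficients.

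The only point requiring mild care is that $\lambda_\Omega$ occurs in both the numerator and the denominator of the coefficient in \eqref{mainineq}, so one cannot simply invoke monotonicity in $\lambda_\Omega$: replacing $\lambda_\Omega$ by the smaller $\mu$ helps the numerator but would hurt the denominator. The resolution is exactly the split above --- bound the numerator from below via $\lambda_\Omega \ge \mu$ and the denominator from above via $\lambda_\Omega \le \|H\|$ --- so that both estimates push the fraction in the same direction. I expect this bookkeeping to be the whole content of the proof; no analytic input beyond Theorems~\ref{main} and~\ref{Dir-low} and the norm bound $\|H+1\|=\|H\|+1$ is needed.
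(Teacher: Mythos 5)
Your proposal is correct and follows exactly the paper's route: the authors also deduce the corollary from Theorem~\ref{main} by inserting the lower bound $\lambda_\Omega \ge \frac{1}{\inr(\Omega)\cdot\vol[\inr(\Omega)]}$ from Theorem~\ref{Dir-low} into the numerator and the trivial bound $\lambda_\Omega \le \|H_\Omega\| \le \|H\|$ (hence $\lambda_\Omega+1 \le \|H+1\|$) into the denominator. Your explicit remark about why one must split the two occurrences of $\lambda_\Omega$ is a sensible clarification of a step the paper leaves implicit.
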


We go on to compare our results with those obtained by Rojas-Molina
\cite{RM14} and Elgart and Klein \cite{EK13} who treat the euclidean
lattices $\ZZ^d$. First note that distances are measured somewhat
differently in comparison to what we do here, e.g., an $R$-Delone
set in the sense of  \cite{RM14} would have covering radius bounded
above by $d\cdot R$. Therefore, we suppress such numerical constants
right now to make things easier, and use $\gtrsim$ to indicate the
corresponding relations.

The energy range for the uncertainty principle (in our notation the
largest possible $\max I$)  is given by $\tilde{E}_W\gtrsim R^{-2d
-2}$ in \cite{RM14}  while our result gives the better
$\lambda_\Omega\gtrsim R^{-d -1}$.
In comparison, \cite{EK13}, provide a lower bound
$\lambda_\Omega\gtrsim   R^{-2d}$ using a Cheeger inequality. Here,
again our estimate is better. We discuss this in more detail in the next section.

\section{A closer look at the combinatorial situation} \label{combgraphs}

In this section we consider the combinatorial situation and exhibit
a large class of models, viz combinatorial graphs with
subexponential growth of balls, to which our results can be applied.
Along the way we also compare our approach to the approach via
Cheeger inequalities used in \cite{EK13}.

\medskip

Throughout this section we consider the case of combinatorial
graphs, i.e. a connected graph $(X,b,m )$ with $m\equiv 1$ and $b$
taking values in $\{0,1\}$  such that with a suitable $\delta\geq 0$
we have $\sum_{y\in X} b(x,y) \leq \delta$ for all $x\in D$. In
fact, in this case $\delta := \sup_x \#\{ y\in X: b(x,y)=1\}$ is the
maximal vertex degree and
$$\vol[s]=\sup_{x\in X} \vol (B_s (x)) = \sup_{x\in X} \# B_s(x).$$

\smallskip

We will be particularly interested in the case  $\inf \sigma (H)
=0$, as we will have non-trivial applications of our main results in
this case.  As is well known, this case  can be characterized via
the \textit{Cheeger constant} or \textit{isoperimetric constant}
$$\beta := \inf_{\emptyset \neq S  \subset X, \# S<\infty}
\frac{ \# \partial S}{\vol(S)},$$ where the
\textit{combinatorial boundary} $\partial S$ of $S$ is given by
$$\partial S:=\{ (x,y) \in X\times X : x\in S, y\notin S \mbox{ and } b(x,y) =1 \}.$$
Indeed, this characterization is given as
$$\inf \sigma (H) = 0 \Longleftrightarrow \beta =0.$$
Here, the  implication `$\Longleftarrow$'  follows easily by a
direct computation. Specifically, for any finite set $S$ we find
$$\en (1_S,1_S) \leq \# \partial S$$
as well as $\|1_S\|^2  = \vol (S)$,   where $1_S$ denotes the
characteristic function. The implication `$\Longrightarrow$' follows
from the Cheeger inequality
$$\inf \sigma (H) \geq \frac{\beta^2}{2\delta}.$$
This inequality goes back to \cite{Dod} (in a slightly different
formulation), see \cite{BKW} and the discussion below as well. A
well-known consequence of these considerations is that   $\inf
\sigma (H) = 0$ whenever the volume growth  of balls is
subexponential:

\begin{prop} Assume that for one (and thus all) $x\in X$
\begin{equation}\label{subexp}
\limsup_{n\to \infty} \frac{\log \vol(B_n (x))}{n} = 0.
\end{equation}
Then, $\inf \sigma (H) = 0$.
\end{prop}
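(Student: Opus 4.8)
The plan is to use the Cheeger constant $\beta$ as the bridge: by the equivalence established just above, it suffices to show that the subexponential growth condition \eqref{subexp} forces $\beta = 0$, since then $\inf\sigma(H) = 0$ follows from the implication `$\Longleftarrow$' already proved. So the whole task reduces to the isoperimetric statement: if balls grow subexponentially, there exist finite sets $S$ with $\#\partial S / \vol(S)$ arbitrarily small. First I would fix a base point $x$ and work with the balls $B_n := B_n(x)$, writing $V_n := \vol(B_n) = \# B_n$.

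The key observation is a telescoping/averaging argument on the boundaries of the nested balls. The combinatorial boundary $\partial B_n$ consists of edges leaving $B_n$; since $b$ takes values in $\{0,1\}$ and the vertex degree is bounded by $\delta$, each such edge contributes at most $\delta$ in the crudest count, but more usefully every vertex in $B_{n+1}\setminus B_n$ is reached by at least one edge from $B_n$, so $\#\partial B_n \le \delta (V_{n+1} - V_n)$ — actually the cleaner inequality is simply $\#\partial B_n \le \delta\,(V_{n+1}-V_n)$, bounding the boundary edges by $\delta$ times the number of newly added vertices. The plan is then to argue by contradiction: suppose $\beta > 0$, so that $\#\partial B_n \ge \beta V_n$ for every $n$. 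Combining the two estimates gives
$$
\beta V_n \le \#\partial B_n \le \delta\,(V_{n+1} - V_n),
$$
hence $V_{n+1} \ge \left(1 + \tfrac{\beta}{\delta}\right) V_n$, and iterating yields the exponential lower bound $V_n \ge \left(1+\tfrac{\beta}{\delta}\right)^n V_0$. Taking logarithms gives $\log V_n / n \ge \log(1 + \beta/\delta) > 0$, so $\limsup_n \log V_n / n > 0$, contradicting \eqref{subexp}. Therefore $\beta = 0$ and the proposition follows.

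The main obstacle I anticipate is getting the right bound relating $\#\partial B_n$ to the increment $V_{n+1} - V_n$ with the correct constant, and being careful that the integer radii $B_n$ are the natural objects here: in the combinatorial setting with $b\in\{0,1\}$ every edge has length $1$, so the distance is the graph distance and $B_{n+1}\setminus B_n$ is exactly the set of vertices at graph distance $n+1$. Every boundary edge in $\partial B_n$ has its outer endpoint in $B_{n+1}\setminus B_n$, and each such outer vertex has at most $\delta$ incident edges, so it can be the endpoint of at most $\delta$ boundary edges; this gives $\#\partial B_n \le \delta\,(V_{n+1}-V_n)$ cleanly. I would also note that the hypothesis is stated for one $x$ but holds for all, which is immediate since any two balls of the same radius differ by a bounded multiplicative factor once centers are at fixed distance (the \emph{$\limsup$} of $\log V_n/n$ is insensitive to the choice of base point), so no additional care is needed there. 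Everything else is the elementary iteration above, and the subexponential hypothesis is used exactly once, at the final contradiction step.
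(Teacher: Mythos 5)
Your proposal is correct and follows essentially the same route as the paper: reduce to showing $\beta=0$ via the Cheeger characterization, use the combinatorial bound $\#\partial B_n(x)\le \delta\,(\vol(B_{n+1}(x))-\vol(B_n(x)))$, and observe that a uniformly positive ratio of boundary to volume would force exponential growth of $\vol(B_n(x))$, contradicting \eqref{subexp}. The only cosmetic difference is that you phrase the growth step as a contradiction with $\beta>0$ while the paper states directly that $\inf_n \vol(B_{n+1}(x)\setminus B_n(x))/\vol(B_n(x))=0$; the substance is identical.
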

\begin{proof} The assumption implies that
$$\inf_n \frac{ \vol ( B_{n+1}(x) \setminus B_n(x))}{\vol (B_n (x))}
= 0$$ (as otherwise we had $\vol(B_{n+1} (x)) \geq (1 + \alpha)^{n}
\vol (B_1 (x))$  with $\alpha$ being the non-vanishing value of the
infimum and this would lead to exponential volume growth). Moreover,
a direct combinatorial argument shows that
$$\# \partial B_n (x) \leq \delta \cdot \vol( B_{n+1}(x) \setminus B_n(x) ) .$$
Putting this together we infer $\beta = 0$.  Hence, by the preceding
considerations the statement on the infimum of the spectrum follows.
\end{proof}

Now let  $D\subset X$ be relatively dense and set
$\Omega:=X\setminus D\not=\emptyset. $ If $\beta =0$ (or,
equivalently, $\inf \sigma (H) =0$), we obtain from Theorem
\ref{Dir-low}
$$\lambda_\Omega = \inf \sigma (H_\Omega) \geq \frac{1}{\inr(\Omega)\cdot \vol
[\inr(\Omega)]}> 0 = \inf \sigma (H).$$ So, the infimum of the
spectrum of $H_\Omega$ is indeed bigger than the infimum of the
spectrum of $H$.  In this case, whenever $I\subset\mathbb{R}$ is an
interval containing $0$ with $\max I < \frac{1}{\inr(\Omega)\cdot
\vol [\inr(\Omega)]}$, we have $P_I (H) \neq 0$. So, we can in
particular  apply  from Corollary \ref{coro-main} to obtain a
non-trivial inequality.
As $\beta =0$ holds whenever \eqref{subexp} is satisfied, these
considerations  provide a large class of examples in which our
approach can be carried out.

\smallskip

If $\beta =0$ and  $D\subset X$ is relatively dense it is also
possible to obtain a lower bound for $\inf \sigma (H_\Omega)$ via a
Cheeger type inequality. Specifically, we define
$$\beta_\Omega:=\inf_{\emptyset \neq S \subset \Omega, \# S<\infty }
\frac{ \# \partial S}{\vol(S)}.$$

For the case of Euclidean lattices this is carried out  in
\cite{EK13}. The general case can be inferred from \cite{Dod,BKW},
as we will explain below. To apply this in a meaningful way we need
an explicit estimate on $\beta_\Omega$. The case  $X = \ZZ^d$  and
$D\subset X$ relatively dense is treated in \cite{EK13} and it is
shown that $\beta_\Omega \geq \ C / R^{d}$ with a suitable constant
$C$ and   $R$ being the covering radius of $D$. It turns out that a
similar bound can be obtained in the general case as well. In fact,
this can be shown rather directly based on the Voronoi
decomposition provided in Proposition \ref{prop-existencevoronoi}.

\begin{prop} Let $D\subset X$ be relatively dense with covering
radius $R$ and set $\Omega = X\setminus D$. Then,
$$\beta_\Omega \geq \frac{1}{\vol[R]}.$$
\end{prop}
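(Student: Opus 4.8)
The plan is to reduce the claim to a combinatorial counting estimate: for every finite non-empty $S\subset\Omega$ one has $\#S\le\vol[R]\cdot\#\partial S$. Since $m\equiv 1$ in the combinatorial situation, $\vol(S)=\#S$, so dividing by $\#S$ gives $\#\partial S/\vol(S)\ge 1/\vol[R]$, and taking the infimum over all such $S$ yields $\beta_\Omega\ge 1/\vol[R]$. The main tool will be a Voronoi decomposition $(V_p)_{p\in D}$ with centres from $D$, which exists by Proposition~\ref{prop-existencevoronoi} and, since $R=\covr(D)<\infty$, satisfies $V_p\subset B_R(p)$ for every $p\in D$.

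First I would construct a charging map $\Phi\colon S\to\partial S$. Fix $x\in S$. As $(V_p)_{p\in D}$ partitions $X$ by (V3), there is a unique $p=p(x)\in D$ with $x\in V_p$, and by (V1) there is a geodesic $\gamma_x=(y_0,\dots,y_k)$ from $y_0=p$ to $y_k=x$ lying entirely in $V_p$. Since $p\in D$ and $S\subset\Omega=X\setminus D$ we have $p\notin S$, so $y_0\notin S$, whereas $y_k=x\in S$; hence there is a smallest index $i\ge 1$ with $y_i\in S$, and then $y_{i-1}\notin S$. Because $b(y_{i-1},y_i)>0$ along a path and $b$ takes values in $\{0,1\}$, we have $b(y_{i-1},y_i)=1$, so the ordered pair $\Phi(x):=(y_i,y_{i-1})$ lies in $\partial S$. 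The crucial observation is that this entry vertex satisfies $y_i\in\gamma_x\subset V_{p(x)}$.

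The key step is then the fibre bound for $\Phi$. Let $e=(v,u)\in\partial S$ lie in the image and suppose $\Phi(x)=e$, so that $v=y_i\in V_{p(x)}$. Since the cells are pairwise disjoint, $v$ belongs to exactly one cell, which forces $p(x)$ to be the unique centre $q$ with $v\in V_q$; in particular $x\in V_q$. Thus the fibre $\Phi^{-1}(e)$ is contained in $S\cap V_q\subset V_q\subset B_R(q)$, whence $\#\Phi^{-1}(e)\le\#B_R(q)\le\vol[R]$. Summing over the image gives $\#S=\sum_{e}\#\Phi^{-1}(e)\le\vol[R]\cdot\#\partial S$, as required.

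I expect the only genuinely delicate point to be this fibre bound, and it rests entirely on the two structural features of the Voronoi decomposition: that the geodesic realizing $d(p(x),x)$ stays inside $V_{p(x)}$, so that the entry vertex of the boundary edge lies in the \emph{same} cell as $x$, and that distinct cells are disjoint, so that this entry vertex pins down the cell uniquely. Everything else is routine bookkeeping; in particular, no Cheeger-type isoperimetric manipulation is needed in this direct argument.
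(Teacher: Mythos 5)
Your proof is correct and follows essentially the same route as the paper: both rest on the Voronoi decomposition, on the fact that the geodesic joining the centre $p\in D$ to a point of $S\cap V_p$ stays in $V_p$ and must therefore cross $\partial S$ inside $V_p$, and on the inclusion $V_p\subset B_R(p)$. The paper organizes the count by cells (each cell meeting $S$ contributes at least one boundary edge and contains at most $\vol[R]$ points of $S$), whereas you charge each vertex of $S$ to a boundary edge and bound the fibres by $\vol[R]$; this is only a difference in bookkeeping.
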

\begin{proof} From Proposition \ref{prop-existencevoronoi} we obtain
a Voronoi decomposition $(V_p)_{p\in D}$ with centers in $D$.
Let $S\subset \Omega$ be an arbitrary non-empty  finite subset of
$\Omega$.  Consider now an arbitrary $p\in D$ with $V_p \cap S\neq
\emptyset$. Then, $V_p$ must contain $u\in X\setminus S$  and a
$w\in S$ with $b(u,w) = 1$. (To see this it suffices to consider a
path in $V_p$ from a $q\in S\cap V_p \neq \emptyset$ to $p\in D
\subset  X\setminus S$. Such a path exists by  (V1). As such a path
starts in $S$ and finishes in the complement of $S$, there must
exist a first edge,  where it leaves $S$. This edge gives the
desired points $u,w$.) So, any $V_p$ that intersects $S$ provides at
least one 'boundary edge'. Consequently, with
$$N(S):=\#\{p\in D: V_p \cap S\neq \emptyset\}$$
we have
$$\# \partial  S\geq N (S).$$
At the same time we also clearly have
$$\vol (S) \leq N(S)\sup_{p\in D}  \vol (V_p) \leq N(S) \vol [R],$$
where we use that any $V_p$ is contained in a ball of radius $R$.
Putting the last two estimates together we find $\# \partial S
/ \vol (S) \geq\frac{1}{\vol[R]}$. As $S$ was an arbitrary non-empty
finite subset of $X$  the desired estimate on $\beta$ follows.
\end{proof}

Based on this proposition and the Cheeger inequality from \cite{BKW}
we obtain for $D\subset X$ with $R = \covr(D)$ the lower bound

\begin{equation}
 \label{alphacheeger}
\lambda_\Omega \geq \frac{\beta_\Omega^2}{2\delta} \geq
\frac{1}{2\delta \cdot \vol [R]^2}.
\end{equation}

In fact, in \cite{BKW}, the main point is to deal with a different
isoperimetric constant $\alpha$, defined in terms of an intrinsic
metric $\rho$ that allows for a Cheeger inequality in the case of
unbounded vertex degree. In our simpler situation, we can choose as
an intrinsic metric the following multiple of the combinatorial
metric, namely
$$
\rho(\cdot, \cdot):= \delta^{-\frac12} d(\cdot,\cdot)$$ and arrive
at (\ref{alphacheeger}) above by applying Lemma 3.5 from \cite{BKW}.
Note that this is the bound that can also be found in \cite{Dod},
where however, Laplacians are defined in a slightly different way.

Clearly, the lower bound
$$\lambda_\Omega \geq \frac{1}{R \cdot \vol[R]},$$
from Theorem \ref{Dir-low} is stronger whenever the volume
$\vol [R]$ grows faster than linear.


\section{Including a potential} \label{potentials}

In this section we discuss how the ideas presented above allow one
in certain cases to include a potential as well.

\medskip

As usual we  assume that we are given a connected graph $(X,b,m)$
satisfying (B) and (M). We denote the set of functions $f :
X\longrightarrow \CC$ which vanish outside a finite set by $C_c
(X)$.  Let now additionally be given a bounded function
$$ V: X\longrightarrow \RR.$$
Then, we define the form $\en_V := \en + V$ and denote the
associated selfadjoint operator by $L_V$ and set
$$\lambda_V :=\inf \sigma (L_V).$$
As $V$ is bounded, so are  $\en_V$ and $L_V$.  In fact, $L_V$ acts
via
$$(L_V f) (x) = \frac{1}{m(x)} \left (\sum_{y\in X} b(x,y)( f(x) - f(y)) +
V(x) f(x)\right).$$

By general principles, sometimes discussed under the name of
Allegretto-Piepenbrinck theorem, e.g.\ \cite{HK}, there exists a a non-negative
\textit{ground state} to $L_V$ i.e. a function
$$\phi :
X\longrightarrow (0,\infty) $$ satisfying the  summability condition
$$\sum_{y\in X} b(x,y) \phi (y) <\infty$$
for every  $x\in X$ as well as
$$\frac{1}{m(x)} \left(\sum_{y\in X} b(x,y) (\phi (x) - \phi (y)) + V(x) \phi
(x)\right) - \lambda_V \phi (x)  \geq 0$$ for all $x\in
X$.\footnote{If the graph is locally finite i.e. $\#\{ y\in X:
b(x,y) >0\} <\infty$ holds for all $x\in X$, one can find $\phi$
with equality in the previous inequality.} Note that all sums in the
previous inequality are absolutely convergent due to the summability
condition satisfied by $\phi$. Then, a \textit{ground state
transform} as discussed e.g. in \cite{HK}, gives
\begin{equation}\label{eq-gs} (\en_V (f,f) -
\lambda_V (f,f)) \geq \en_\phi \left( \frac{f}{\phi},
\frac{f}{\phi}\right)
\end{equation} for all $f\in C_c (X)$. Here, $\en_\phi$ is the form associated
to the graph $(X,b_\phi, m_\phi)$ with
$$ b_\phi (x,y) = \phi (x) \phi (y) b(x,y)$$
for all $x,y\in X$ and $m_\phi (x) = \phi (x)^ 2 m(x)$ for $x\in X$.
Specifically,
$$\en_\phi (f,g) = \frac{1}{2}\sum_{x,y\in X} b_\phi (x,y) (f(x) -
f(y))(\overline{g(x)} - \overline{g(y)})$$ for $g\in C_c (X)$.

Assume now that the ground state $\phi$ is \textit{regular} i.e.
there exists a  $c\geq 1$, called the \textit{bound} on $\phi$ with
$$ 0 < \frac{1}{c} \leq \phi (x) \leq c$$
for all $x\in X$ (see \cite{FSW} for further discussion of regular
ground states).  Then, the graph $(X, b_\phi, m_\phi)$ satisfies the
assumption (B) and (M).

Moreover, the distance $d_\phi$ associated to $(X,b_\phi, m_\phi)$
is equivalent to the distance $d$ associated to $(X,b,m)$ in the
sense that we have
$$ \frac{1}{c^2}  d (x,y) \leq d_\phi (x,y) \leq c^2 d(x,y)$$
for all $x,y\in X$. Similarly, volumes are equivalent for any finite
set $S\subset X$ in the sense that we have
$$ \frac{1}{c^2} m (S)\leq m_\phi (S) \leq c^2 \vol (S).$$ Let now $D\subset X$ be relatively dense with respect to $d$ with
covering radius $R$. Set $\Omega := X\setminus D$. As $d$ and
$d_\phi$ are equivalent, then $D$ is relatively dense with covering
radius less than $c^2 R$ with respect to $d_\phi$.

We  can now apply Theorem \ref{Dir-low} to $(X,b_\phi, m_\phi)$.
Taking into acount the  equivalence of metrics and volumes we obtain
from this theorem
\begin{equation}\label{eq-estimate-en-phi}
\en_{\phi} (g,g) \geq \frac{1}{ c^4\cdot  R \cdot \vol [c^2  R]}
\|g\|^2_{\ell^2 (X,m_\phi)} \end{equation} for all $g\in C_c
(\Omega)$. We can then combine \eqref{eq-gs} and
 \eqref{eq-estimate-en-phi} to obtain
 $$(\en_V (f,f) - \lambda_V (f,f)) \geq \frac{1}{ c^4\cdot R\cdot \vol[c^2 R] }
\|f /  \phi  \|^2_{\ell^2 (X,m_\phi)} =  \frac{1}{ c^4\cdot R\cdot
\vol[c^2 R] } \|f\|^2$$ for all $f\in C_c (\Omega)$. So, if we
define $\en_{V,\Omega}$ as  the restriction of $\en_V$ to
$\ell^2(\Omega,m)$ we can summarize the preceding considerations in
the following theorem.

\begin{theorem} Let $(X,b,m)$ be as above and $V : X\longrightarrow
\RR$ bounded and  $\en_V$ the associated form. Let $D\subset X$ be
relatively dense with covering radius $R$.  If there exists a
regular ground state with bound $c$ to $\en_V$  then,
$$  \en_{V,\Omega} \geq \lambda_V  + \frac{1}{ c^4 \cdot R
\cdot \vol[c^2 R] }$$ holds.
\end{theorem}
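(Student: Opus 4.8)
The plan is to eliminate the potential $V$ by a \emph{ground state transform}, thereby reducing the assertion to the potential-free lower bound of Theorem~\ref{Dir-low}. First I would invoke the Allegretto--Piepenbrinck principle (as in \cite{HK}) to obtain the strictly positive ground state $\phi$ of $L_V$ together with the ground state transform \eqref{eq-gs}, which bounds the energy surplus $\en_V(f,f)-\lambda_V\|f\|^2$ from below by $\en_\phi(f/\phi,f/\phi)$ for $f\in C_c(X)$, where $\en_\phi$ is the driftless energy form of the reweighted graph $(X,b_\phi,m_\phi)$ with $b_\phi(x,y)=\phi(x)\phi(y)b(x,y)$ and $m_\phi(x)=\phi(x)^2m(x)$. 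This is the conceptual heart of the argument: it trades the Schr\"odinger operator $L_V$ on $(X,b,m)$ for a pure Laplacian on the transformed graph, at the price of the substitution $g=f/\phi$.

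Next I would verify that $(X,b_\phi,m_\phi)$ is again a graph to which our theory applies. Using regularity of $\phi$, i.e.\ $1/c\le\phi\le c$ with $c\ge 1$, one checks that $(X,b_\phi,m_\phi)$ satisfies (B) and (M), so that Theorem~\ref{Dir-low} is available for it; here one also records the two-sided comparisons $c^{-2}d\le d_\phi\le c^2 d$ for the distances and $c^{-2}m(S)\le m_\phi(S)\le c^2 m(S)$ for volumes of finite sets. Since $D$ is relatively dense for $d$ with covering radius $R$, the upper comparison of distances shows that $D$ remains relatively dense for $d_\phi$ with covering radius at most $c^2R$; by Lemma~\ref{lem-covr-equal-inr} this simultaneously controls the inradius of $\Omega$ in the transformed metric.

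With the transformed graph in hand I would apply Theorem~\ref{Dir-low} to $(X,b_\phi,m_\phi)$ to get a bound $\en_\phi(g,g)\ge \rho\,\|g\|^2_{\ell^2(X,m_\phi)}$ for all $g\in C_c(\Omega)$, where $\rho$ is the reciprocal of (inradius)$\,\times\,$(ball volume) computed in the transformed metric and measure; translating the balls and volumes back to $d$ and $m$ through the comparison constants turns this into \eqref{eq-estimate-en-phi}, with $\rho=\bigl(c^4\cdot R\cdot\vol[c^2R]\bigr)^{-1}$. Finally, setting $g=f/\phi$ and exploiting the clean identity $\|f/\phi\|^2_{\ell^2(X,m_\phi)}=\|f\|^2_{\ell^2(X,m)}$, which holds because $m_\phi|f/\phi|^2=m|f|^2$ pointwise, and combining with \eqref{eq-gs}, I obtain $\en_V(f,f)-\lambda_V\|f\|^2\ge \rho\,\|f\|^2$ for every $f\in C_c(\Omega)$. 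As $\en_{V,\Omega}$ is the restriction of the bounded form $\en_V$ to $\ell^2(\Omega,m)$ and $C_c(\Omega)$ is a core for it, this passes to the claimed form inequality.

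I expect the main obstacle to be the bookkeeping of the constant $c$: one must track it consistently through the covering-radius/inradius identification of Lemma~\ref{lem-covr-equal-inr}, the monotonicity of $s\mapsto s\,\vol[s]$, and the inclusion of $d_\phi$-balls inside $d$-balls of enlarged radius, in order to land on the precise exponents appearing in $c^4\cdot R\cdot\vol[c^2R]$. A secondary point requiring care is that Theorem~\ref{Dir-low} genuinely transfers: one should confirm that relative denseness of $D$ and the standing assumptions (B) and (M) hold for $(X,b_\phi,m_\phi)$, and that restricting attention to $C_c(\Omega)$ is enough to realize the form lower bound on all of $\dom(\en_{V,\Omega})$.
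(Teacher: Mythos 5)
Your proposal is correct and follows essentially the same route as the paper: a ground state transform via the Allegretto--Piepenbrinck principle reduces the claim to Theorem~\ref{Dir-low} applied to the reweighted graph $(X,b_\phi,m_\phi)$, with the regularity bound $c$ tracking the comparison of metrics and volumes to yield the constant $c^4\cdot R\cdot\vol[c^2R]$. The only points you make explicit that the paper leaves implicit are the pointwise identity $m_\phi|f/\phi|^2=m|f|^2$ and the density of $C_c(\Omega)$ in the form domain, both of which are routine here since the forms are bounded.
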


\begin{rem}
If the metric $d$ satisfies the \textit{volume doubling property}
that there exists an $N>0$ with $\vol [\alpha s]  \leq \alpha^N \vol
[s]$ for all $\alpha \geq 1$ and $s>0$ we can further estimate the
bound in the previous theorem as
$$\en_{V,\Omega} \geq \lambda_V  + \frac{1}{ c^{4+2N} \cdot R \cdot \vol[R] }.$$
\end{rem}

{\small

}

\end{document}